\newcommand{\ZZ}{\mathbf{Z}}
\newcommand{\PP}{\mathbf{P}}
\newcommand{\TT}{\mathbf{T}}
\newcommand{\RR}{\mathbf{R}}
\newcommand{\FF}{\mathbf{F}}
\newcommand{\conv}{\textup{conv}}
\newtheorem{theorem}{Theorem}
\newtheorem{lemma}[theorem]{Lemma}
\newtheorem{corollary}[theorem]{Corollary}
\newtheorem{conjecture}[theorem]{Conjecture}
\theoremstyle{definition}
\newtheorem{example}[theorem]{Example}
\theoremstyle{remark}
\newtheorem{remark}[theorem]{Remark}
\begin{document}

\title{Curves in characteristic $2$ with non-trivial
$2$-torsion}
\author{Wouter Castryck, Marco Streng, Damiano Testa}

\maketitle
\begin{abstract}
  \noindent Cais, Ellenberg and Zureick-Brown recently observed that over finite fields of characteristic two,
  all sufficiently general smooth plane projective curves of a given odd degree admit a non-trivial rational $2$-torsion point
  on their Jacobian. We extend their observation to curves given
  by Laurent polynomials with a fixed Newton polygon, provided that the polygon
  satisfies a certain combinatorial property. We also show that in each of these cases, the sufficiently
  general condition is implied by being ordinary.
  Our treatment includes many classical families, such
  as hyperelliptic curves of odd genus and $C_{a,b}$ curves. In the hyperelliptic case, we provide alternative
  proofs using an explicit description of the $2$-torsion subgroup.
\end{abstract}

\section{Introduction} \label{introduction}

The starting point of this article is a recent theorem by Cais, Ellenberg and Zureick-Brown~\cite[Thm.~4.2]{CEZB}, asserting
that over a finite field $k$ of characteristic $2$, almost all smooth plane projective curves of a given odd degree $d \geq 3$ have a non-trivial $k$-rational $2$-torsion point on their Jacobian. Here, `almost all' means that the corresponding proportion converges to $1$ as $\#k$ and/or $d$ tend to infinity.
The underlying observation is that such curves admit
\begin{itemize}
  \item a `geometric' $k$-rational half-canonical divisor $\Theta_\text{geom}$: the canonical class of a smooth plane projective curve of degree $d$ equals $(d-3) H$, where $H$ is the class of hyperplane sections; if $d$ is odd then $\frac{1}{2}(d-3) H$ is half-canonical,
  \item an `arithmetic' $k$-rational half-canonical divisor $\Theta_\text{arith}$ (whose class is sometimes called the \emph{can\-onical theta characteristic}), related to the fact that over a perfect field of characteristic $2$, the derivative of a Laurent series is always a square~\cite[p.~191]{mumford}.
\end{itemize}
The difference $\Theta_\text{geom} - \Theta_\text{arith}$ maps to a $k$-rational $2$-torsion point on the Jacobian. The proof of~\cite[Thm.~4.2]{CEZB} then amounts to showing that, quite remarkably, this point is almost always non-trivial.

There exist many classical families of curves admitting such a geometric half-canonical divisor. Examples include hyperelliptic curves of odd genus~$g$, whose canonical class is given by $(g-1) g^1_2$ (where $g^1_2$ denotes the hyperelliptic pencil), and smooth projective curves in $\PP_k^1 \times \PP_k^1$ of even bidegree $(a,b)$ (both $a$ and $b$ even, that is), where the canonical class reads $(a-2) R_1 + (b-2) R_2$ (here $R_1,R_2$ are the two rulings of $\PP_k^1 \times \PP_k^1$). The families mentioned so far are parameterized by sufficiently generic polynomials that are supported on the polygons
\begin{center}
\psset{unit=0.3cm}
\begin{pspicture}(-1,-3.2)(5,5)
\pspolygon[fillstyle=solid,linecolor=black](0,0)(4,0)(0,4)
\psline{->}(-1,0)(5,0) \psline{->}(0,-1)(0,5)
\rput(4,-0.6){\small $d$} \rput(-0.5,4){\small $d$}
\rput(-0.5,-0.6){\small $0$}
\rput(2,-1.8){\footnotesize smooth plane curves}
\rput(2,-2.9){\footnotesize of degree $d$}
\end{pspicture}
\qquad \qquad
\begin{pspicture}(-1,-3.2)(7,5)
\pspolygon[fillstyle=solid,linecolor=black](0,0)(6,0)(0,2)
\psline{->}(-1,0)(7,0) \psline{->}(0,-1)(0,5)
\rput(6,-0.7){\small $2g+2$} \rput(-0.5,2){\small $2$}
\rput(-0.5,-0.6){\small $0$}
\rput(3,-1.8){\footnotesize hyperelliptic curves}
\rput(3,-2.9){\footnotesize of genus $g$}
\end{pspicture}
\qquad \qquad
\begin{pspicture}(-1,-3.2)(5,5)
\pspolygon[fillstyle=solid,linecolor=black](0,0)(4,0)(4,3)(0,3)
\psline{->}(-1,0)(5,0) \psline{->}(0,-1)(0,5)
\rput(4,-0.5){\small $a$} \rput(-0.5,3){\small $b$}
\rput(-0.5,-0.6){\small $0$}
\rput(2,-1.8){\footnotesize curves in $\PP_k^1 \times \PP_k^1$}
\rput(2,-2.9){\footnotesize of bidegree $(a,b)$,}
\end{pspicture}
\end{center}
respectively. The following lemma, which is an easy consequence of the theory of toric surfaces (see Section~\ref{section_toric}), gives a purely combinatorial reason for the existence of a half-canonical divisor in these cases.

\begin{lemma} \label{combinatoriclemma}
Let $k$ be a perfect field and let $\Delta$ be a two-dimensional lattice polygon. For each edge $\tau \subset \Delta$, let $a_\tau X + b_\tau Y = c_\tau$ be its supporting line,
where $\gcd(a_\tau,b_\tau) = 1$. Suppose that the  system of congruences
\begin{equation} \label{congs}
\left\{ \ a_\tau X + b_\tau Y \equiv c_\tau + 1 \pmod{2}  \ \right\}_{\tau \text{ edge of } \Delta}
\end{equation}
admits a solution in $\ZZ^2$. Then any sufficiently general Laurent polynomial $f \in k[x^{\pm 1}, y^{\pm 1}]$ that is supported on $\Delta$ defines a curve carrying a $k$-rational half-canonical divisor
on its non-singular complete model.
\end{lemma}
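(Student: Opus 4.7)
The plan is to realize the non-singular complete model inside a smooth toric surface and apply adjunction. I would first associate to $\Delta$ its projective toric surface $X_\Delta$ and refine the normal fan to obtain a smooth projective toric surface $Y$ dominating $X_\Delta$. The rays of the refined fan are the primitive inner normals $(a_\tau,b_\tau)$ of the edges $\tau$ of $\Delta$, together with finitely many additional rays introduced by the refinement. Writing $D_\rho$ for the torus-invariant prime divisor attached to a ray $\rho$ and $c_\rho := \min_{(X,Y) \in \Delta}(a_\rho X + b_\rho Y)$ (which for an edge ray recovers $c_\tau$), for sufficiently general $f$ the Zariski closure $C \subset Y$ of $\{f = 0\} \cap (k^*)^2$ is a smooth projective curve by Khovanskii non-degeneracy and avoids every torus-fixed point of $X_\Delta$ — such a point lies on the closure only if a particular vertex coefficient of $f$ vanishes — hence also avoids the exceptional locus of $Y \to X_\Delta$. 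Setting $E_\rho := D_\rho \cap C$, this forces $E_\rho = 0$ for every ray $\rho$ that is not an edge normal of $\Delta$, and $C$ is the required non-singular complete model.

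Next, I would invoke the standard toric identities $K_Y \sim -\sum_\rho D_\rho$ and $[C] \sim \sum_\rho (-c_\rho) D_\rho$ on $Y$ (the latter expressing $C$ as the divisor of a section of the line bundle associated to $\Delta$). Adjunction gives $K_C \sim \sum_\rho (-c_\rho - 1) E_\rho$, and adding the principal divisor on $C$ of the character $x^m y^n$ (namely $\sum_\rho (a_\rho m + b_\rho n) E_\rho$) while discarding the vanishing new-ray terms yields
\[
K_C \sim \sum_{\tau \text{ edge of } \Delta} \bigl(a_\tau m + b_\tau n - c_\tau - 1\bigr)\,E_\tau
\]
for every $(m,n) \in \ZZ^2$.

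To conclude, I would apply this formula with $(m,n)$ a solution of the congruence system \eqref{congs}. The hypothesis then forces each coefficient $a_\tau m + b_\tau n - c_\tau - 1$ to be even, so
\[
\Theta \;:=\; \sum_\tau \frac{a_\tau m + b_\tau n - c_\tau - 1}{2}\, E_\tau
\]
is an integral divisor with $2\Theta \sim K_C$. Since every $D_\tau$ is torus-invariant, hence defined over $k$, and $f \in k[x^{\pm 1},y^{\pm 1}]$, each $E_\tau$ and therefore $\Theta$ is $k$-rational. I expect the main obstacle to be the clean verification that for sufficiently general $f$ the strict transform $C$ is smooth and disjoint from the torus-fixed points of $X_\Delta$, so that the extra boundary divisors introduced by the resolution contribute nothing on $C$ and the canonical class can be written purely in terms of the edges of $\Delta$; this is a standard but slightly technical consequence of the non-degeneracy theory of Khovanskii.
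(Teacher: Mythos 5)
Your proof is correct, and it reaches the paper's divisor by a genuinely different (more self-contained) route. The paper works directly on the possibly singular but normal surface $\mathrm{Tor}_k(\Delta)$ and quotes the explicit identity $\mathrm{div}\,\frac{dx}{xy\,\partial f/\partial y}=\sum_\tau(-\langle\nu_\tau,p_\tau\rangle-1)D_\tau$ from \cite[Cor.~2.7]{CDV} (equation~\eqref{divisorequality}); the translation by a solution $(i_0,j_0)$ of~\eqref{congs} is implemented by replacing $f$ with $x^{-i_0}y^{-j_0}f$, which makes every $\langle\nu_\tau,p_\tau\rangle=\pm c_\tau$ odd and lets one divide all coefficients by $2$. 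You instead pass to a smooth toric resolution, check that a non-degenerate curve misses the exceptional locus (so the new rays contribute nothing), and derive the same coefficients $-c_\tau-1$ from $K_Y\sim-\sum_\rho D_\rho$ and $[C]\sim\sum_\rho(-c_\rho)D_\rho$ by adjunction, with the translation step realized as adding $\mathrm{div}(x^my^n)|_C$. This is exactly the ``more conceptual proof using adjunction theory'' that the paper alludes to after~\eqref{divisorequality}, and for Lemma~\ref{combinatoriclemma} alone it suffices, since only the canonical \emph{class} is needed. What the paper's formulation buys, and what your argument does not directly provide, is the explicit rational function $xy\,\partial f/\partial y$ with $2\Theta_{\text{geom}}=\mathrm{div}\,\frac{dx}{xy\,\partial f/\partial y}$, which is used in an essential way in the proof of Theorem~\ref{maintheorem} to compare $\Theta_{\text{geom}}$ with $\Theta_{\text{arith}}$. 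Two small points to tidy: your $c_\rho=\min_{\Delta}(a_\rho X+b_\rho Y)$ silently fixes the inner-normal sign convention for $(a_\tau,b_\tau,c_\tau)$, which the statement leaves ambiguous (harmless, since only parity matters); and ``sufficiently general'' should be pinned down as $\Delta$-non-degeneracy with $\Delta(f)=\Delta$, which is what guarantees both smoothness of the closure and avoidance of the fixed points.
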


\noindent In the proof of Lemma~\ref{combinatoriclemma} below, where we describe this half-canonical divisor explicitly, we will be more precise on the meaning of `sufficiently general'.

Here again, when specializing to characteristic $2$, there is, in addition, an arithmetic $k$-rational half-canonical divisor. So it is natural to wonder whether the proof of~\cite[Thm.~4.2]{CEZB} still applies in these cases. We will show that it usually does.

\begin{theorem} \label{maintheorem}
Let $\Delta$ be a two-dimensional lattice polygon satisfying the conditions of Lemma~\ref{combinatoriclemma}, where in addition we assume that $\Delta$ is not unimodularly equivalent to
\begin{center}
\psset{unit=0.3cm}
\begin{pspicture}(-1,-3.2)(5,4)
\pspolygon[fillstyle=solid,linecolor=black](1,0)(0,3)(3,1)
\psline{->}(-1,0)(5,0) \psline{->}(0,-1)(0,4)
\rput(1,-0.55){\small $1$} \rput(-0.5,3){\small $3$}
\rput(3.7,1.9){\small $(3,1)$}
\end{pspicture}
\quad \quad
\begin{pspicture}(-4,-3.2)(12,4)
\pspolygon[fillstyle=solid,linecolor=black](0,0)(6,0)(0,1)
\psline{->}(-1,0)(7,0) \psline{->}(0,-1)(0,4)
\rput(6,-0.6){\small $k$} \rput(-0.5,1){\small $1$}
\rput(-0.5,-0.6){\small $0$}
\rput(3,-1.8){\footnotesize for some $k \geq 1$}
\rput(11,1.5){or}
\end{pspicture}
\quad \quad
\begin{pspicture}(-4,-3.2)(6,4)
\pspolygon[fillstyle=solid,linecolor=black](0,0)(6,1)(2,2)(0,1)
\psline{->}(-1,0)(7,0) \psline{->}(0,-1)(0,4)
\rput(-0.5,1){\small $1$}
\rput(-0.5,-0.6){\small $0$}
\rput(3,-1.8){\footnotesize for some $0 \leq k < \ell \geq 3 $}
\rput(3,-2.9){\footnotesize with $k$ even and $\ell$ odd.}
\rput(7.6,1){\small $(\ell,1)$}
\rput(2,2.7){\small $(k,2)$}
\end{pspicture}
\end{center}
Then there exists a non-empty Zariski open subset $S_\Delta / \FF_2$ of the space of
Laurent polynomials that are supported on~$\Delta$ having the following property.
For every perfect field $k$ of characteristic $2$ and every $f \in S_\Delta(k)$,
the Jacobian of the non-singular complete model of the
curve defined by $f$ has a non-trivial $k$-rational $2$-torsion point.
\end{theorem}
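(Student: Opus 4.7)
The plan is to follow the strategy of Cais, Ellenberg, and Zureick-Brown, adapted to Laurent polynomials supported on $\Delta$. For sufficiently general $f$, Lemma~\ref{combinatoriclemma} provides an explicit $k$-rational geometric half-canonical divisor $\Theta_{\text{geom}}$ on the non-singular complete model $C_f$, described via the toric surface $X_\Delta$. In characteristic $2$, an arithmetic half-canonical class $[\Theta_{\text{arith}}]$ is available because for any non-square rational function $u$ on $C_f$, the divisor $(du)$ has even multiplicity at every closed point: in a local uniformizer $t$, the formula $d(t^n)=n\,t^{n-1}dt$ vanishes whenever $n$ is even, forcing $\operatorname{ord}_P(du)$ to be even. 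The class of $(du)/2$ is independent of the choice of $u$, and the difference $[\Theta_{\text{geom}} - \Theta_{\text{arith}}]$ then defines a $k$-rational $2$-torsion class on $\operatorname{Jac}(C_f)$, whose generic non-triviality is the content of the theorem.

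I would assemble $S_\Delta$ as the intersection of the $\FF_2$-open $\Delta$-non-degenerate locus (on which $C_f$ has the expected genus and both half-canonical divisors are well-defined) with the open complement of the subscheme cut out by the linear equivalence $\Theta_{\text{geom}} \sim \Theta_{\text{arith}}$. This latter condition is Zariski-closed since linear equivalence of two explicitly given divisors is detected by the vanishing of a suitable minor in a Riemann-Roch matrix whose entries depend algebraically on the coefficients of $f$. Because the entire construction is visibly $\FF_2$-rational, the resulting $S_\Delta$ is automatically defined over $\FF_2$, and the theorem reduces to showing that $S_\Delta$ is non-empty, that is, to producing a single witness $f_0$ (over $\overline{\FF_2}$, say) for which $[\Theta_{\text{geom}} - \Theta_{\text{arith}}] \neq 0$.

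Producing such a witness, and simultaneously explaining the role of the three excluded families, is the heart of the argument and the main obstacle. My approach would be to describe $D := \Theta_{\text{geom}} - \Theta_{\text{arith}}$ as an explicit integer combination of the points of $C_f$ lying on the $T$-invariant boundary of $X_\Delta$, read off from the edge and lattice data of $\Delta$ and from a specific choice of $u$ (most naturally $x$ or $y$) in the construction of $\Theta_{\text{arith}}$. Whether $D \sim 0$ generically is then controlled by dimensions of Riemann-Roch spaces on $C_f$, which, as usual in the toric setting, are expressible via lattice-point counts in $\Delta$ and its adjoint polygon. I expect the three excluded shapes to be precisely those for which the relevant count forces $D$ to be principal for every $f$ --- in particular for very thin polygons where $C_f$ has small genus, and for the triangle where the combinatorics of \eqref{congs} degenerate --- while for the allowed shapes the generic behavior is non-principal. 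Carrying out this combinatorial dichotomy cleanly, and confirming that no further exceptional polygons slip through, is the delicate step and where the bulk of the technical work will lie.
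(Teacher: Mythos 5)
Your framing (the two half-canonical classes $\Theta_{\mathrm{geom}}$, $\Theta_{\mathrm{arith}}$ and their difference as a $2$-torsion class) matches the paper's, but the proposal has a genuine gap exactly where you acknowledge it: the non-triviality of $\Theta_{\mathrm{geom}}-\Theta_{\mathrm{arith}}$ is never proved, only deferred to ``technical work.'' Moreover the route you sketch for it is unlikely to succeed as stated. You propose to decide whether $D=\Theta_{\mathrm{geom}}-\Theta_{\mathrm{arith}}$ is principal via Riemann--Roch dimensions ``expressible via lattice-point counts in $\Delta$,'' but $h^0$ of a degree-$(g-1)$ divisor class is not a combinatorial invariant of $\Delta$: the whole difficulty is that $\Theta_{\mathrm{arith}}$ is an arithmetic object (by St\"ohr--Voloch, $h^0(\Theta_{\mathrm{arith}})$ is the corank of the Hasse--Witt matrix), invisible to the toric combinatorics, and varying with $f$ in a Frobenius-twisted way. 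For the same reason your reduction to ``a single witness $f_0$ over $\overline{\FF}_2$'' needs justification that the locus $\{\Theta_{\mathrm{geom}}\sim\Theta_{\mathrm{arith}}\}$ is Zariski closed and $\FF_2$-rational; $\Theta_{\mathrm{arith}}$ is only defined over perfect fields, so this is not the routine semicontinuity statement you assert. You have also misdiagnosed the excluded polygons: they are not the cases where $D$ is forced to be principal for every $f$; they are the cases where the system of congruences \eqref{congs} has no solution \emph{inside} $\Delta\cap\ZZ^2$ (this is the content of Lemma~\ref{solutionsinside}), so the sufficient condition defining $S_\Delta$ cannot even be formulated. Indeed, for the triangle with vertices $(1,0),(3,1),(0,3)$ a positive proportion of the resulting quartics do have non-trivial $2$-torsion.

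The paper avoids all of this with a short, fully explicit argument that works for \emph{every} $f$ in an explicitly defined $S_\Delta$ (namely $c_{i_0,j_0}\rho_\Delta\neq 0$, where $(i_0,j_0)\in\Delta\cap\ZZ^2$ solves \eqref{congs}) over every perfect field of characteristic $2$, with no generic-point or witness argument needed. After translating so that $(0,0)$ solves \eqref{congs} and $c_{0,0}\neq 0$, one must show $xy\,\partial f/\partial y$ is a non-square in $k(C_f)$. Supposing $H^2xy\,\partial f/\partial y+\alpha f=G^2$ in $k[x^{\pm1},y^{\pm1}]$ with $f\nmid H$, differentiating with respect to $y$ and then (after eliminating $\alpha$) with respect to $x$ --- squares have zero derivative in characteristic $2$ --- produces the identity
\[
H^2\Bigl(f+x\tfrac{\partial f}{\partial x}+y\tfrac{\partial f}{\partial y}+xy\tfrac{\partial^2 f}{\partial x\partial y}\Bigr)+\tfrac{\partial\beta}{\partial x}f^2=0 .
\]
The bracket equals $\sum(1+i)(1+j)c_{i,j}x^iy^j$, is non-zero because $c_{0,0}\neq0$, is supported on $\Delta(f)$, and yet would have to be divisible by $f^2$, whose Newton polygon is $2\Delta(f)$ --- a contradiction. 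If you want to salvage your proposal, this derivative computation (or an equivalent substitute) is the missing ingredient you must supply.
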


\noindent (Right before the proof of Theorem~\ref{maintheorem} we will define the set $S_\Delta$ explicitly.) As a consequence, if $k$
is a finite field of characteristic $2$, then the proportion of Laurent polynomials
that are supported on~$\Delta$, which define a curve whose Jacobian has a
non-trivial $k$-rational $2$-torsion point, tends to $1$ as $\#k \rightarrow \infty$.
See the end of
Section~\ref{section_mainresult}, where we also discuss asymptotics for
increasing dilations of $\Delta$, i.e.\ the analogue of $d \rightarrow \infty$ in the smooth plane curve case.
In Section~\ref{hassewitt} we give sufficient conditions that have a more
arithmetic flavor, involving the rank of the Hasse-Witt matrix.

These observations seem new even for hyperelliptic curves of odd genus\footnote{In
view of the asymptotic consequences discussed in Section~\ref{section_mainresult}, this observation shows that
\cite[Principle~3]{CFHS} can fail for $g > 2$.} (even though this is a well-known fact for the subfamily of hyperelliptic curves having a prescribed
$k$-rational Weierstrass point; see below).
In this case we can give alternative proofs using an explicit description of the $2$-torsion subgroup; see Section~\ref{hyperelliptic}.
Another interesting class of examples is given by the polygons
\begin{center}
\psset{unit=0.3cm}
\begin{pspicture}(-1,-1)(5,5)
\pspolygon[fillstyle=solid,linecolor=black](0,0)(4,0)(0,3)
\psline{->}(-1,0)(5,0) \psline{->}(0,-1)(0,5)
\rput(4,-0.6){\small $a$} \rput(-0.5,3){\small $b$}
\rput(-0.5,-0.6){\small $0$}
\end{pspicture}
\end{center}
where $a$ and $b$ are not both even. The case $a = b$ corresponds to the smooth plane curves of odd degree considered in~\cite{CEZB}. The case $\gcd(a,b) = 1$ corresponds to so-called $C_{a,b}$ curves. The
case $b=2$, $a = 2g+1$ (a subcase of the latter) corresponds
to hyperelliptic curves having a prescribed $k$-rational Weierstrass point $P$. Note that in this case $g^1_2 \sim 2P$, so there is indeed always a $k$-rational half-canonical divisor, regardless of the parity of~$g$.
\begin{remark}
This explains why Denef and Vercauteren had to allow a factor $2$ while generating
cryptographic hyperelliptic and $C_{a,b}$ curves in characteristic $2$; see Sections~6 of~\cite{DV1,DV2}.
\end{remark}
\noindent Finally, the case $b=3$, $a \geq 4$ corresponds to trigonal curves having maximal Maroni invariant (that is trigonal curves for which the series $(h^0(n g^1_3))_{n \in \ZZ_{\geq 0}}$ starts increasing by steps of $3$ as late as the Riemann-Roch theorem allows it to do); if $a=6$, these are exactly the genus-$4$ curves having a unique $g^1_3$.

We conclude by stressing
that the results in this paper are unlikely to generalize to characteristic $p > 2$, by lack of an appropriate analogue of our arithmetic half-canonical
divisor $\Theta_\text{arith}$.

\section{Half-canonical divisors from toric geometry} \label{section_toric}

Let $k$ be a perfect field, let $f = \sum_{(i,j) \in \ZZ^2} c_{i,j} x^i y^j \, \in k[x^{\pm 1}, y^{\pm 1}]$ be a
Laurent polynomial, and let
\[ \Delta(f) = \conv \left\{ \, \left. (i,j) \in \ZZ^2 \, \right| \, c_{i,j} \neq 0 \,    \right\} \]
be the Newton polygon of~$f$, which we assume to be two-dimensional. We say that $f$ is \emph{non-degenerate with respect to its Newton polygon} if for every face $\tau \subset \Delta(f)$ (vertex, edge, or $\Delta(f)$ itself)
the system
\[ f_\tau = \frac{\partial f_\tau}{\partial x} = \frac{\partial f_\tau}{\partial y} = 0 \qquad \text{with } f_\tau = \sum_{(i,j) \in \tau \cap \ZZ^2} c_{i,j} x^i y^j \]
has no solutions\footnote{Note that this is in fact automatically true if $\tau$ is a vertex.} over an algebraic closure of $k$.
For a given two-dimensional lattice polygon $\Delta$, we say that $f$ is \emph{$\Delta$-non-degenerate}
if $\Delta(f) = \Delta$ and $f$ is non-degenerate with respect to its Newton polygon.
The condition of $\Delta$-non-degeneracy is generically
satisfied, in the sense that it is characterized by the non-vanishing of
\[ \rho_\Delta := \text{Res}_\Delta \left(f, x \frac{\partial f}{ \partial x}, y \frac{\partial f}{\partial y} \right) \, \in \ZZ[  c_{i,j} | (i,j) \in \Delta \cap \ZZ^2] \]
(where $\text{Res}_\Delta$ is the sparse resultant; $\rho_\Delta$ does not vanish identically in any characteristic~\cite[\S2]{CastryckVoight}).
Non-degenerate Laurent polynomials are always (absolutely) irreducible.

To a two-dimensional lattice polygon $\Delta$ one can associate a toric surface $\text{Tor}_k(\Delta)$, which
is a compactification of $\TT_k^2 = \text{Spec} \, k[x^{\pm 1}, y^{\pm 1}]$ to which
the natural self-action of the latter extends algebraically.
This extended action decomposes $\text{Tor}_k(\Delta)$ in a finite number of orbits,
which naturally correspond (in a dimension-preserving manner) to the
faces of $\Delta$; for each face $\tau$, write $O(\tau)$ for
the according orbit.
Now if $f \in k[x^{\pm 1}, y^{\pm 1}]$
is a $\Delta$-non-degenerate Laurent polynomial,
the non-degeneracy condition with respect to $\Delta$ itself
ensures that it cuts out a non-singular
curve $C_f$ in $\TT_k^2 = O(\Delta)$. Similarly, one finds that its compactification
$C'_f$ in $\text{Tor}_k(\Delta)$ does not contain any of the
zero-dimensional $O(\tau)$'s, and that it intersects the one-dimensional $O(\tau)$'s transversally.
\begin{center}
\begin{pspicture}(-2.2,-2.2)(2.2,2.6)
\pspolygon[linecolor=black](-1,-1.5)(1.5,-0.3)(1.7,1.7)(-0.5,0.9)
\psline{->}(-2.1,0)(2.1,0) \psline{->}(0,-2.1)(0,2.1)
\rput(0.8,0.7){$\Delta$} \rput(0.9,-0.9){$\tau_1$}
\rput(1.85,0.5){$\tau_2$} \rput(0.4,1.5){$\tau_3$}
\rput(-1.1,-0.7){$\tau_4$} \rput(-2,2){$\mathbf{R}^2$}
\end{pspicture} \qquad \qquad \qquad
\begin{pspicture}(-2.5,-2.2)(2.1,2.5)
\psccurve[linewidth=1pt](-1.8,-1.7)(-1.6,-1)(-1.5,-1.6)(-1,-1.2)(-0.3,-1.9)(0.5,-0.6)(1.8,-0.2)%
                        (1.2,0.3)(1.9,0.5)(1,0.9)(2,1.5)(1.2,2)(0.8,1.3)(0.2,1.9)(-0.5,1)(-1,1.2)%
                        (-1.8,0.3)(-2,-0.5)
\psline{-}(-2.5,-1.2)(1.7,-1.6) \psline{-}(1.5,-1.8)(1.4,2.5)
\psline{-}(1.6,2.4)(-2.3,0.2) \psline{-}(-1,2)(-2.1,-2)
\rput(0,0){$\mathbf{T}^2_k$}
\rput(0.8,-0.85){$C_f'$} \rput(2.05,-1){$O(\tau_2)$}
\rput(-1.65,1.55){$O(\tau_4)$}
\rput(0.8,-1.8){$O(\tau_1)$}
\rput(0.6,2.3){$O(\tau_3)$}
\rput(-2.2,2.2){$\text{Tor}_k(\Delta)$}
\end{pspicture}
\end{center}
In particular, since $\text{Tor}_k(\Delta)$ is normal, the
non-degeneracy of $f$ implies that $C'_f$ is a non-singular complete model of $C_f$.
See \cite[\S3-4]{linearpencils} and~\cite[\S2]{CDV}
for more details.

\begin{example} \label{smoothplaneexample}
  Assume that $\Delta = \conv \{ (0,0), (d,0), (0,d) \}$.
  In this case $\text{Tor}_k(\Delta)$ is just the projective plane, and the toric orbits are
  \begin{itemize}
  \item $\TT_k^2 = O(\Delta)$,
  \item the three coordinate points $(1:0:0)$, $(0:1:0)$ and $(0:0:1)$,
  which are the orbits of the form $O(\text{vertex})$,
  \item the three coordinate axes from which the coordinate points are removed: these are the orbits of the form $O(\text{edge})$.
  \end{itemize}
Thus $C'_f$ is a non-singular projective plane curve that is non-tangent to any of the coordinate axes, and that does not contain
any of the coordinate points.
This is essentially an if-and-only-if: an absolutely irreducible Laurent polynomial $f \in k[x^{\pm 1}, y^{\pm 1}]$, for which $\Delta(f) \subset \Delta$,
is $\Delta$-non-degenerate if and only if its zero locus in $\mathbf{T}_k^2$ compactifies to a non-singular degree $d$ curve
in $\mathbf{P}^2_k$ that
is non-tangent to the coordinate axes, and that does not contain the coordinate points.
\end{example}

\begin{example}
  Let $g \geq 2$ be an integer, and consider $f = y^2 + h_1(x)y + h_0(x)$, where $\deg h_1 \leq g + 1$, $\deg h_0 = 2g + 2$, and $h_0(0) \neq 0$. Then
  $\Delta(f) = \conv \{ (0,0), (2g+2,0), (0,2) \}$, and $\text{Tor}_k(\Delta(f))$ is the weighted projective plane $\PP_k(1:g+1:1)$. Here again,
  if $f$ is non-degenerate with respect to its Newton polygon then $C'_f$ is a non-singular curve that is non-tangent to the coordinate axes
  and that does not contain any coordinate points. In this case $C'_f$ is a hyperelliptic
  curve of genus $g$ (cf.\ Remark~\ref{remarkgenus}).
\end{example}

Now for each edge $\tau \subset \Delta$ let
$\nu_\tau \in \ZZ^2$ be the inward pointing primitive normal vector to $\tau$, let $p_\tau$ be any element of $\tau \cap \ZZ^2$, and let $D_\tau$ be the $k$-rational divisor on $C'_f$ cut out by $O(\tau)$.
Using the $\Delta$-non-degeneracy of $f$ one can prove
\begin{equation} \label{divisorequality}
 \text{div} \frac{dx}{xy \frac{\partial f}{\partial y}} = \sum_{\tau \text{ edge}} (- \langle \nu_\tau , p_\tau \rangle - 1) D_\tau.
\end{equation}
Here $\langle \cdot, \cdot \rangle$ is the standard inner product on $\RR^2$.
See~\cite[Cor.~2.7]{CDV} for an elementary but elaborate proof of (\ref{divisorequality}).
It is possible to give a more conceptual proof using adjunction theory, along the lines of \cite[Prop.\,10.5.8]{coxlittleschenck}.

\begin{remark} \label{rem:sing}
From the theory of sparse resultants it follows that $\partial f / \partial y$ does
not vanish identically,
so that the left-hand side of (\ref{divisorequality}) makes sense.
Note also that $0 = df = \frac{\partial f}{\partial x} dx + \frac{\partial f}{\partial y} dy$, so
we could as well have written
\begin{equation} \label{divisorequalityalternative}
\text{div} \frac{dy}{xy \frac{\partial f}{\partial x}}.
\end{equation}
\end{remark}

\begin{proof}[Proof of Lemma~\ref{combinatoriclemma}]
Assume that $f$ is $\Delta$-non-degenerate (which, as mentioned above, is a non-empty Zariski open condition).
Let $(i_0,j_0) \in \ZZ^2$ be a solution to
the given system of congruences. We claim that the translated polygon $(-i_0, -j_0) + \Delta$ is such that all corresponding $\langle \nu_\tau , p_\tau\rangle$'s are odd. To see this, note that in this case $(0,0)$ is a solution to the according system of congruences (\ref{congs}). This implies that all $c_\tau$'s are odd.
Together with $\langle \nu_\tau , p_\tau\rangle = \pm c_\tau$ this yields the claim. 
So by applying the above to $x^{-i_0}y^{-j_0} f$, we find that
\[ \Theta_\text{geom} = \sum_{\tau \text{ edge}} \frac{- \langle \nu_\tau , p_\tau \rangle - 1}{2} D_\tau \]
is a $k$-rational half-canonical divisor on $C'_{x^{-i_0}y^{-j_0} f} = C'_f$.
\end{proof}

\noindent \textbf{Example~\ref{smoothplaneexample} (continued).} Assume that $d$ is odd, so that the conditions from Lemma~\ref{combinatoriclemma}
are satisfied. Applying the above proof with $(i_0,j_0) = (1,1)$ yields
\[ \Theta_\text{geom} = \frac{d-3}{2} D_\infty \]
where $D_\infty$ is the divisor cut out by the line at infinity. So 
we recover the divisor class mentioned in the introduction.

\begin{remark} \label{remarknondegversussmooth}
Still assume that $\Delta = \conv \{ (0,0), (d,0), (0,d) \}$ with $d$ odd. We already noted that the condition of non-degene\-racy restricts our attention to smooth plane curves of degree $d$ that do not contain the coordinate points and that intersect the coordinate axes transversally. But of course \emph{any} smooth plane curve of degree $d$ carries a $k$-rational half-canonical divisor. This shows that the non-degeneracy condition, even though it is generically satisfied,
is sometimes a bit stronger than needed.\footnote{The reader might want to note that there always exists an automorphism of $\PP_k^2$ that puts our smooth plane curve in a non-degenerate position (at least if $\#k$ is sufficiently large). But for
more general instances of $\Delta$, the automorphism group of $\text{Tor}_k(\Delta)$ may be much smaller (e.g.\ the only automorphisms may be the ones coming from the $\TT_k^2$-action), in which case it
might be impossible to resolve tangency to the one-dimensional toric orbits.}
For a general two-dimensional lattice polygon $\Delta$,
the according weaker condition reads that $f$ is \emph{$\Delta$-toric}, meaning
that $\Delta(f) \subset \Delta$, that $\Delta(f)^{(1)} = \Delta^{(1)}$, and
that $C_f$ compactifies to a non-singular curve $C_f'$ in $\text{Tor}_k(\Delta)$.
Here $\Delta^{(1)}$ denotes the lattice polygon obtained by taking
the convex hull of the $\mathbf{Z}^2$-points that lie in the interior of $\Delta$, and similarly for $\Delta(f)^{(1)}$.
See \cite[\S4]{linearpencils} for more background on this notion.
Now we have to revisit Remark~\ref{rem:sing}, however:
there do exist instances of $\Delta$-toric Laurent polynomials $f \in k[x^{\pm 1}, y^{\pm 1}]$ for which
$\partial f / \partial y$ \emph{does} vanish identically (example: take $f = 1 + x^2y^2 + x^3y^2$ and $\Delta = \Delta(f)$).
For these instances
the left-hand side of (\ref{divisorequality}) does not make sense. But in that
case $\partial f / \partial x$ does not vanish identically (otherwise $C_f$ would have singularities), and one can prove that (\ref{divisorequality}) holds with the left-hand side replaced by (\ref{divisorequalityalternative}).
\end{remark}

\begin{remark} \label{remarkgenus}
We mention two other well-known features of $\Delta$-non-degenerate (or $\Delta$-toric)
Laurent polynomials, that can be seen as consequences to~\eqref{divisorequality}; see for instance~\cite{linearpencils,CastryckVoight} and
the references therein:
\begin{itemize}
\item the genus of $C'_f$ equals $\# \left(\Delta^{(1)} \cap \ZZ^2 \right)$,
\item
if $\# \left(\Delta^{(1)} \cap \ZZ^2 \right) \geq 2$, then $C'_f$ is hyperelliptic if
and only if $\Delta^{(1)} \cap \ZZ^2$ is contained in a line.
\end{itemize}
\end{remark}

\section{Proof of the main result} \label{section_mainresult}

\begin{lemma} \label{solutionsinside}
Let $\Delta$ be a two-dimensional lattice polygon and
suppose as in Lemma~\ref{combinatoriclemma} that~\eqref{congs} admits a solution in $\ZZ^2$. If $\Delta$ is not among the polygons excluded in the hypothesis of Theorem~\ref{maintheorem}, then there is a solution of~\eqref{congs} contained in $\Delta \cap \ZZ^2$.
\end{lemma}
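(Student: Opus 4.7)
The plan is to translate $\Delta$ by a solution $(i_0,j_0) \in \ZZ^2$ so that $(0,0)$ becomes a solution of~\eqref{congs}; the system then reduces to $a_\tau X + b_\tau Y \equiv 0 \pmod{2}$ for all $\tau$, with solution set equal to the sublattice
\[
L \;=\; \bigl\{ (X,Y) \in \ZZ^2 : a_\tau X + b_\tau Y \equiv 0 \pmod{2} \text{ for all edges } \tau \bigr\}.
\]
Since $L \supseteq 2\ZZ^2$, the index $[\ZZ^2:L]$ is $1$, $2$, or $4$, and the goal becomes to show that $\Delta \cap L \neq \emptyset$.

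The index-$1$ case is immediate. For index $2$, I would prove the following general fact: any $2$-dimensional lattice polygon $\Delta$ meets both cosets of every index-$2$ sublattice $L' \subset \ZZ^2$. Suppose not, so after translating we may assume $\Delta \cap \ZZ^2 \subseteq L'$. Pick's theorem in $\ZZ^2$ gives $\textup{area}(\Delta) = I + B/2 - 1$, while Pick's theorem applied in $L'$ (which has covolume $2$ in $\RR^2$) gives $\textup{area}(\Delta) = 2I + B - 2$, because by hypothesis the interior and boundary $\ZZ^2$-lattice points of $\Delta$ coincide with its interior and boundary $L'$-lattice points. Combining these two formulas yields $I + B/2 = 1$, forcing $B \leq 2$, contradicting that $\Delta$ has at least three vertices.

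The main work is the index-$4$ case $L = 2\ZZ^2$. Assume for contradiction that $\Delta \cap 2\ZZ^2 = \emptyset$. Applying the index-$2$ result to each of the three index-$2$ sublattices sitting between $2\ZZ^2$ and $\ZZ^2$ (the kernels of $X$, $Y$, and $X+Y$ modulo $2$), we find that the image of $\Delta \cap \ZZ^2$ in $(\ZZ/2\ZZ)^2$ must equal exactly $\{(1,0),(0,1),(1,1)\}$. I would then classify such polygons up to unimodular equivalence by the lattice width $w(\Delta)$. The case $w(\Delta) = 1$ forces $\Delta$ to be a thin triangle between two parallel lines one apart, unimodularly equivalent to the second excluded polygon. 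The case $w(\Delta) = 2$ splits into a triangle sub-case (equivalent to the second exception when degenerate, or to the third exception via a shear $X \mapsto X - kY$) and a quadrilateral sub-case also giving the third exception. The case $w(\Delta) = 3$ is shown to produce only the triangle $\conv\{(1,0),(0,3),(3,1)\}$ by exhausting possible vertex arrangements. Finally $w(\Delta) \geq 4$ is ruled out: a polygon of such width contains enough lattice points in two independent directions to guarantee a point with both coordinates even, contradicting $\Delta \cap 2\ZZ^2 = \emptyset$.

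The main obstacle is the classification in the index-$4$ case, in particular ruling out $w(\Delta) \geq 4$ and enumerating the width-$2$ and width-$3$ shapes without omission. For width $2$, the key subtlety is that the parity of the horizontal extent at the middle row determines whether the normals of $\Delta$ reduce to a single class modulo $2$ (placing us back in the already-resolved index-$2$ case) or truly produce a Case B exception of the form of the third excluded polygon. For width $3$, one must argue that no quadrilateral or pentagonal possibilities survive the "no even-even lattice point" constraint, leaving only the first exception.
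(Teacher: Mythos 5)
Your reduction to the three cases $[\ZZ^2:L]\in\{1,2,4\}$ is sound, and your index-$2$ step is correct and complete: if all lattice points of a two-dimensional lattice polygon lay in a single coset of an index-$2$ sublattice $L'$, then comparing Pick's formula for $\ZZ^2$ with Pick's formula for $L'$ (fundamental domain of area $2$) forces $I+B/2=1$, impossible since a two-dimensional lattice polygon has at least three boundary lattice points. The deduction in the index-$4$ case that the image of $\Delta\cap\ZZ^2$ in $(\ZZ/2\ZZ)^2$ is exactly $\{(1,0),(0,1),(1,1)\}$ then follows correctly by applying the index-$2$ statement to the three intermediate lattices. This is a genuinely more elementary route than the paper's, which instead classifies \emph{all} polygons with non-surjective reduction map $\pi_\Delta$ by combining Koelman's classification (for polygons with collinear interior lattice points) with the ``onion skin'' peeling and the Haase--Schicho criterion for when a polygon is the convex hull of the interior lattice points of a strictly larger one, and then checks the resulting short list (a)--(e) against the congruence system case by case.

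The genuine gap is exactly where you flag ``the main obstacle'': the index-$4$ classification is asserted, not proved, and all of the content distinguishing the excluded polygons from the rest lives there. The claims that width $\geq 4$ is impossible, that width $3$ yields only $\conv\{(1,0),(0,3),(3,1)\}$, and that the width-$1$ and width-$2$ enumerations are exhaustive are each nontrivial and are given without argument. Ruling out width $4$ is not automatic: the planar flatness bound for convex bodies avoiding a lattice only gives width less than $1+2/\sqrt{3}$ with respect to $2\ZZ^2$, i.e.\ less than about $4.31$ with respect to $\ZZ^2$, so width $4$ must be excluded by a combinatorial argument exploiting that the vertices lie in $\ZZ^2$. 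The width-$2$ and width-$3$ enumerations must account for all vertex configurations (you only mention triangles and quadrilaterals, whereas a width-$2$ polygon can a priori have up to six vertices before the ``at most one lattice point on each even row'' constraint is invoked), must track which of the resulting shapes actually have congruence system of index $4$ rather than $2$ (you correctly note this subtlety but do not resolve it), and must be matched against the excluded list up to unimodular equivalence \emph{and} translation. Until this classification is carried out, the lemma is not proved; Koelman's classification and the Haase--Schicho lemmas are precisely the machinery the paper uses to do this work.
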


\begin{proof}
Let us first classify all two-dimensional lattice polygons $\Delta$ for which the reduction-modulo-$2$ map $\pi_\Delta \colon \Delta \cap \ZZ^2 \rightarrow \left( \ZZ/(2) \right)^2$ is not surjective.
If the interior lattice points of $\Delta$ lie on a line,
then surjectivity fails if and only if $\Delta$ is among
\begin{center}
\psset{unit=0.3cm}
\begin{pspicture}(-1,-4.5)(7,3.5)
\pspolygon[fillstyle=solid,linecolor=black](0,0)(6,0)(0,1)
\rput(-0.8,-0.8){\small $(0,0)$}
\rput(-0.8,1.8){\small $(0,1)$}
\rput(6.8,-0.8){\small $(k,0)$,}
\rput(3,-2.2){\footnotesize (for some $k \geq 1$)}
\rput(3,-4.5){\small (a)}
\end{pspicture}
\qquad \quad \ \
\begin{pspicture}(-1,-4.5)(4,3.5)
\pspolygon[fillstyle=solid,linecolor=black](0,0)(2,0)(1,2)
\rput(-0.8,-0.8){\small $(0,0)$}
\rput(1,2.7){\small $(1,2)$}
\rput(2.8,-0.8){\small $(2,0)$,}
\rput(1.5,-4.5){\small (b)}
\end{pspicture}
\qquad \quad \ \
\begin{pspicture}(-1,-4.5)(4,3.5)
\pspolygon[fillstyle=solid,linecolor=black](1,0)(2,1)(1,2)(0,1)
\rput(-1.5,1){\small $(0,1)$}
\rput(1,2.7){\small $(1,2)$}
\rput(3.5,1){\small $(2,1)$}
\rput(1,-0.8){\small \phantom{,}$(1,0)$,}
\rput(1.5,-4.5){\small (c)}
\end{pspicture}
\qquad \quad \ \
\begin{pspicture}(-1,-4.5)(7.7,3.5)
\pspolygon[fillstyle=solid,linecolor=black](0,0)(6,1)(2,2)(0,1)
\rput(-1.5,1){\small $(0,1)$}
\rput(2,2.7){\small $(k,2)$}
\rput(7.5,1){\small $(\ell,1)$}
\rput(-0.5,-0.8){\small $(0,0)$}
\rput(3,-2.2){\footnotesize ($0 \leq k \leq \ell \geq 3$, $k$ even)}
\rput(3,-4.5){\small (d)}
\end{pspicture}
\end{center}
(up to unimodular equivalence). This assertion follows from Koelman's classification; see~\cite[Ch.~4]{Koelman} or~\cite[Thm.~10]{movingout}.
Now any two-dimensional lattice polygon $\Delta$ can be peeled into `onion skins', by subsequently taking the convex hull of the interior lattice points, until
one ends up with a lattice polygon whose interior lattice points are contained in a line.
\begin{center}
\psset{unit=0.3cm}
\begin{pspicture}(0,0)(11,7)
\pspolygon[fillstyle=solid,linecolor=black](0,3)(1,1)(4,0)(7,0)(10,1)(11,3)(8,6)(3,7)(1,6)
\pspolygon[fillstyle=solid,linestyle=dashed,linecolor=black](1,2)(2,1)(9,1)(10,2)(10,3)(7,6)(2,6)(1,5)
\pspolygon[fillstyle=solid,linestyle=dashed,linecolor=black](2,2)(9,2)(9,3)(7,5)(2,5)
\pspolygon[fillstyle=solid,linestyle=dashed,linecolor=black](3,3)(8,3)(7,4)(3,4)
\end{pspicture}
\end{center}
If $\pi_\Delta$ is not surjective, then clearly $\pi_{\Omega}$ is not
surjective for each onion skin $\Omega$. In particular, the last onion skin must necessarily be among (a-d).

But for a lattice polygon to arise as an onion skin of a strictly larger
lattice polygon $\Delta$ is a stringent condition. Using the criterion from~\cite[Lem.~9-11]{HaaseSchicho} one sees that the only polygons among
(a-d) of this type are the polygons (a) with $k = 1$ or $k = 2$, the polygon (b) and the polygon (c). The same
criterion shows that the only instance of such a larger $\Delta$ for which $\pi_\Delta$ is not surjective is
\begin{center}
\psset{unit=0.3cm}
\begin{pspicture}(-1,-3)(4,4.5)
\pspolygon[fillstyle=solid,linecolor=black](1,0)(0,3)(3,1)
\rput(1.2,-0.55){\small $(0,-1)$} \rput(-0.5,3.7){\small $(-1,2)$}
\rput(4.1,1.9){\small $(2,0)$}
\rput(1.5,-3){\phantom{.} \small (e)}
\end{pspicture}
\end{center}
(up to unimodular equivalence). The latter, again by~\cite[Lem.~9-11]{HaaseSchicho}, is not an onion skin of a strictly bigger lattice polygon itself.
This ends the classification: up to unimodular equivalence, the instances of $\Delta$ for which $\pi_\Delta$ is not surjective are
(a)-(e).

Now let $\Delta$ be a two-dimensional lattice polygon and suppose that~\eqref{congs} admits a solution in $\ZZ^2$. If
$\pi_\Delta$ is surjective, then it clearly also admits a solution in $\Delta \cap \ZZ^2$. So we may assume that
$\Delta$ is among (a-e). Then the lemma follows by noting that cases (b), (c) and (d) with $\ell$ even admit the solution $(1,1) \in \Delta \cap \ZZ^2$, and that cases (a), (e) and (d) with $\ell$ odd were excluded in the \'enonc\'e.
\end{proof}

\begin{remark}
Because of Remark~\ref{remarkgenus}, the excluded polygons correspond to certain classes of smooth plane quartics, rational curves, and hyperelliptic curves, respectively.
\end{remark}

\noindent We can now define the variety $S_\Delta$ mentioned in the statement of Theorem~\ref{maintheorem}. Namely, we will prove the existence of a
non-trivial $k$-rational $2$-torsion point under the assumption that
\begin{itemize}
  \item $f$ is $\Delta$-non-degenerate (i.e.\ the genericity assumption from Lemma~\ref{combinatoriclemma}), and
  \item for at least one solution $(i_0,j_0) \in \Delta \cap \ZZ^2$ to the system of congruences~\eqref{congs}, the corresponding coefficient $c_{i_0,j_0}$ is non-zero.
\end{itemize}
So we can let $S_\Delta$ be defined by $c_{i_0,j_0} \rho_\Delta \neq 0$.

\begin{remark} \label{hereagain}
Here again, one can weaken the
condition of being $\Delta$-non-degenerate to being $\Delta$-toric, as
described in Remark~\ref{remarknondegversussmooth}. When that stronger version is applied
to $\Delta = \conv \{ (0,0), (d,0), (0,d) \}$ with $d$ odd, one exactly recovers~\cite[Thm.~4.2]{CEZB}.
\end{remark}

\begin{proof}[Proof of Theorem~\ref{maintheorem}]
By replacing $f$ with $x^{-i_0}y^{-j_0}f$
if needed, we assume that $(0,0) \in \Delta$ is a solution to the system of congruences~\eqref{congs} and that the constant term of $f$ is non-zero.
As explained in~\cite[p.~191]{mumford}, $C_f'$ comes equipped with a $k$-rational divisor $\Theta_\text{arith}$ such that $2 \Theta_\text{arith} = \text{div} \, dx$.
(Recall that the derivative of a Laurent series over $k$ is always a square, so the order of $dx$ at a point of $C_f'$ is indeed even.)
On the other hand, Lemma~\ref{combinatoriclemma} and its proof provide us with a $k$-rational divisor $\Theta_\text{geom}$ such that
\[
2\Theta_\text{geom} = \text{div} \, \frac{dx}{xy \frac{\partial f}{\partial y}}.
\]
In order to prove that $\Theta_\text{geom} \not \sim \Theta_\text{arith}$ (and hence that $\text{Jac}(C'_f)$ has a non-trivial $k$-rational $2$-torsion point), we
need to show that
\[
xy \frac{\partial f}{\partial y}
\]
is a non-square when considered as an element of the function field $k(C_f)$. If it were a square, then there would exist Laurent polynomials $\alpha, G, H$ such that
\begin{equation} \label{eq1}
H^2 xy \frac{\partial f}{\partial y} + \alpha f =  G^2 \qquad \text{in $k[x^{\pm 1}, y^{\pm 1}]$},
\end{equation}
where $f \nmid H$.
Taking derivatives with respect to $y$ yields
\[ (\alpha + H^2 x) \frac{\partial f}{\partial y} = \frac{\partial \alpha}{\partial y} f,\]
which together with~\eqref{eq1} results in
\[ \left( (\alpha + H^2 x) \alpha + H^2 xy \frac{\partial \alpha}{\partial y} \right) f = (\alpha + H^2 x) G^2.\]
Since $f$ is irreducible, it follows that $f \mid (\alpha + H^2 x)$ or $f \mid G^2$. Using~\eqref{eq1} and $f \nmid H$, the latter implies that $f  \mid \frac{\partial f}{\partial y}$, which is
a contradiction (by the theory of sparse resultants, see Remark~\ref{rem:sing}; one can alternatively repeat the argument using (\ref{divisorequalityalternative}) if wanted).
So we know that $f \mid (\alpha + H^2 x)$. Along with~\eqref{eq1} we conclude that there exists a Laurent polynomial $\beta \in k[x^{\pm 1}, y^{\pm 1}]$ such that
\[ H^2 x \left( y \frac{\partial f}{\partial y} + f \right)  + \beta f^2 = G^2.\]
Taking derivatives with respect to $x$ yields
\[ H^2 \left( f + x \frac{\partial f}{\partial x} + y \frac{\partial f}{\partial y} + xy \frac{\partial^2 f}{\partial x \partial y} \right) + \frac{\partial \beta}{\partial x} f^2 = 0.\]
Since $f$ has a non-zero constant term, the large factor between brackets is non-zero. On the other hand, since $f \nmid H$, it must be a multiple of $f^2$.
Note that $\Delta(f^2) = 2\Delta(f)$, while $\Delta(f+\cdots + xy\partial^2 f / (\partial x\partial y)) \subset \Delta(f)$. This is a contradiction.
\end{proof}
\vspace{0.3cm}
\noindent We end this section by discussing some asymptotic consequences to Theorem~\ref{maintheorem}.

\subsubsection*{Growing field size.}
  Let $\Delta$ be a two-dimensional
  lattice polygon satisfying the conditions of Theorem~\ref{maintheorem}.
  Let $k$ be a finite field of characteristic $2$. Because
  non-degeneracy is characterized by the non-vanishing of $\rho_\Delta$, the
  proportion of $\Delta$-non-degenerate Laurent polynomials $f \in k[x^{\pm 1}, y^{\pm 1}]$
  (amongst all
  Laurent polynomials that are supported on $\Delta$) converges to $1$
  as $\# k \rightarrow \infty$. Then Theorem~\ref{maintheorem} implies:
  \[ \lim_{\#k \rightarrow \infty} \text{Prob} \left( \, \text{Jac}(C_f')(k)[2] \neq 0 \, \left| \, \text{$f \in k[x^{\pm 1}, y^{\pm 1}]$ is
  $\Delta$-non-degenerate} \, \right. \right) = 1. \]
   As soon as $\# (\Delta^{(1)} \cap \ZZ^2) \geq 2$ this is deviating statistical behavior: in view
   of Katz-Sarnak-Chebotarev-type density theorems~\cite[Theorem~9.7.13]{KatzSarnak}, for a general smooth proper family of
  genus $g$ curves, one expects that the
  probability of having a non-trivial
rational $2$-torsion point on the Jacobian
  approaches the chance that a random matrix in $\text{GL}_g(\FF_2)$ satisfies
  $\det(M - \text{Id}) = 0$, which is
  \[ - \sum_{r = 1}^g \prod_{j=1}^r \frac{1}{1- 2^j}\]
  by~\cite[Thm.~6]{CFHS}. For $g = 1, 2, 3, 4, \dots$, these probabilities are $1, \frac{2}{3}, \frac{5}{7}, \frac{32}{45}, \dots$ (converging to about $0.71121$).

  In the table below we denote by $\square_i$ the square $[0,i]^2$ (for $i=2,3,4$),
  by $H_g$ the hyperelliptic polygon $\conv \{ (0,0), (2g + 2,0), (0,2) \}$ (for $g = 7,8$), and
  by $E$ the exceptional polygon $\conv \{ (1,0), (3,1), (0,3) \}$ from the statement of Theorem~\ref{maintheorem}.
  Each entry
  corresponds to a sample of $10^4$ uniformly randomly chosen
  Laurent polynomials $f \in k[x^{\pm 1}, y^{\pm 1}]$ that are supported on $\square_2, \square_3, \dots$
  The table presents the proportion of $f$'s for which $\text{Jac}(C_f')$ has a non-trivial $k$-rational $2$-torsion point,
  among those $f$'s that are non-degenerate with
  respect to their Newton polygon $\Delta(f) = \square_2, \square_3, \dots$ The count was carried out
  using Magma~\cite{magma}, either by using the intrinsic function for computing the Hasse-Weil zeta function,
  or by spelling out the Hasse-Witt matrix \cite[Thm.~1.1]{stohrvoloch} and applying Manin's theorem \cite{manin}.\\

  \begin{center}
  \begin{tabular}{|r||c|c|c|c|c|c|c|}
    \hline
    $k \phantom{i}$ & $\begin{array}{c}\square_2 \\ \text{\footnotesize{$(g = 1)$}} \\ \end{array}$ & $\begin{array}{c}\square_3 \\ \text{\footnotesize{$(g = 4)$}} \\ \end{array}$ & $\begin{array}{c}\square_4 \\ \text{\footnotesize{$(g = 9)$}} \\ \end{array}$ & $\begin{array}{c} H_7 \\ \text{\footnotesize{$(g = 7)$}} \\ \end{array}$ & $\begin{array}{c} H_8 \\ \text{\footnotesize{$(g = 8)$}} \\ \end{array}$ & $\begin{array}{c} E \\ \text{\footnotesize{$(g = 3)$}} \\ \end{array}$ \\
    \hline \hline
    \rule{0pt}{3ex} $\mathbf{F}_2 \phantom{i}$ & $0/0$ & 0.370 & 0.958 & 0.995 & 0.670 & 0.143 \\
    $\mathbf{F}_4 \phantom{i}$ & 0.750 & 0.621 & 1.000 & 1.000 & 0.795 & 0.449 \\
    $\mathbf{F}_8 \phantom{i}$ & 0.884 & 0.654 & 1.000  & 1.000 & 0.852 & 0.591 \\
    $\mathbf{F}_{16} \phantom{i}$ & 0.940 & 0.697 & 1.000 & 1.000 & 0.872 & 0.661 \\
    $\mathbf{F}_{32} \phantom{i}$ & 0.968 & 0.704 & 1.000 & 1.000 & 0.877 & 0.696 \\
    $\mathbf{F}_{64} \phantom{i}$ & 0.986 & 0.716  & 1.000 & 1.000 & 0.880 & 0.694 \\
    $\mathbf{F}_{128} \phantom{i}$ & 0.992 & 0.703  & 1.000 & 1.000 & 0.889 & 0.708\\
    $\mathbf{F}_{256} \phantom{i}$ & 0.996 & 0.709  & 1.000 & 1.000 & 0.888 & 0.707\\
    $\begin{array}{r} \text{\footnotesize{asymptotic}} \\
                      \text{\footnotesize{prediction}} \\
     \end{array}$
    & $1$ & $\frac{32}{45} \approx 0.711$ & $1$ & $1$ & $\frac{8}{9} \approx 0.889$ & $\frac{5}{7} \approx 0.714$ \\
    \hline
  \end{tabular}
  \end{center}
  \vspace{0.4cm}
  Note that the conditions of Theorem~\ref{maintheorem} are
  satisfied for $\square_2$, $\square_4$ and $H_7$. So here we proved that the proportion converges to $1$.
  In the case of $H_8$, by the material in Section~\ref{hyperelliptic}
(see Corollary~\ref{cor:hyp5}) we know that the proportion converges
  to $\frac{8}{9}$. In the other two cases $\square_4$ and $E$ we have no clue, so our best guess is that these follow
  the $\text{GL}_g(\FF_2)$-model.

\subsubsection*{Growing polygon.} Let $k$ be a finite field of characteristic $2$. If $\Delta$ is a two-dimensional lattice polygon satisfying the conditions
  of Lemma~1, then the same holds for each odd Minkowski multiple $(2n + 1)\Delta$.
  It seems reasonable to assume that the proportion of $(2n+1)\Delta$-non-degenerate Laurent polynomials $f \in k[x^{\pm 1}, y^{\pm 1}]$, amongst all
  Laurent polynomials that are supported on $(2n+1)\Delta$, converges to
  a certain strictly positive constant.

  This is certainly true for the larger proportion of $(2n+1)\Delta$-\emph{toric} Laurent polynomials.
  Namely,
  using~\cite[Thm.~1.2]{poonen} one
  can show that this proportion converges to
  \[ Z_{\text{Tor}_k(\Delta) \setminus S}((\#k)^{-3})^{-1} \cdot Z_S((\#k)^{-1})^{-1} \]
  as $n \rightarrow \infty$; here $S$ denotes the (finite) set of singular points of $\text{Tor}_k(\Delta)$, and $Z$ stands for
  the Hasse-Weil Zeta function.
  It should be possible to prove a similar statement for non-degenerate Laurent polynomials
  by redoing the closed point sieve in the proof of~\cite[Thm.~1.2]{poonen}, but we did not work out
  the details of this.

  On the other hand, the number of solutions to~\eqref{congs} inside $(2n+1)\Delta \cap \ZZ^2$ tends to infinity. So
  the assumption would allow
  one to conclude:
  \[ \lim_{n \rightarrow \infty} \text{Prob} \left( \, \text{Jac}(C_f')(k)[2] \neq 0 \, \left| \, \text{$f \in k[x^{\pm 1}, y^{\pm 1}]$ is $(2n+1)\Delta$-non-degenerate} \, \right. \right) = 1. \]
  This is again deviating statistical behavior: in view of Cohen-Lenstra type heuristics, one naively expects a probability of about
  \[ 1 - \prod_{j = 1}^\infty (1 - 2^{-j}) \approx 0.71121;\]
  see~\cite{CEZB} for some additional comments.

  When applied to $(2n+1)\Sigma$-toric Laurent polynomials, where $\Sigma$ is
  the standard simplex, one recovers the claim made before~\cite[Thm.~4.2]{CEZB}.

\section{Connections with the rank of the Hasse-Witt matrix} \label{hassewitt}

Let us revisit the proof of Theorem~\ref{maintheorem} from the previous section. Our sufficient
condition that
\begin{equation} \label{condi}
 c_{i_0,j_0} \neq 0 \quad \text{for at least one solution $(i_0,j_0) \in \Delta \cap \mathbf{Z}^2$ to the system (\ref{congs})}
\end{equation}
(see right before Remark~\ref{hereagain}) seems
rather equation-specific.
However, it is easy to show that
automorphisms of $\text{Tor}_k(\Delta)$ cannot alter whether (\ref{condi}) is
satisfied or not. For instance, in the case of smooth plane projective curves of odd degree $d \geq 3$,
one verifies that if
\[ F(X,Y,Z) = \sum_{i + j \leq d} c_{i,j} X^i Y^j Z^{d - i - j}  \in k[X,Y,Z] \]
is such that $c_{i,j} = 0$ as soon as both $i$ and $j$ are odd, then
applying a linear change of variables does not affect this.
This suggests that something more fundamental is going on.
In Conjecture~\ref{hassewittconjecture} below we formulate a
guess for a geometric interpretation of
condition (\ref{condi}), involving the rank of the Hasse-Witt matrix,
and prove this guess in a number of special cases.
Our main references
on the Hasse-Witt matrix are \cite{manin,serre,stohrvoloch}.\footnote{More precisely, we follow~\cite[p.\,54]{stohrvoloch} and view the Hasse-Witt matrix as the Frobenius-conjugate transpose of the matrix of the \emph{Cartier-Manin operator} with respect to a basis of the space of regular differentials. The default definition of the Hasse-Witt matrix instead considers the \emph{Frobenius operator} acting on cohomology. Taking the Cartier-Manin detour was recently advised against by Achter and Howe~\cite{warning}, because it easily leads to misuses of Frobenius as a semilinear operator. But in this section we are only interested in the rank of the Hasse-Witt matrix, which is equal to the rank of the Cartier-Manin operator, so we stay out of the danger zone.} 

Here is a first fact:

\begin{lemma} \label{conjectureeasydirection}
  Let $k$ be a perfect field of characteristic $2$, let
  $\Delta$ be a two-dimensional lattice polygon satisfying the conditions of
  Lemma~\ref{combinatoriclemma}, and let
  $f = \sum_{(i,j) \in \ZZ^2} c_{i,j} x^i y^j \, \in k[x^{\pm 1}, y^{\pm 1}]$
  be a $\Delta$-non-degenerate (or $\Delta$-toric) Laurent polynomial.
  Let
  \begin{itemize}
   \item $g$ be the genus of $C_f'$, i.e.\ $g = \# (\Delta^{(1)} \cap \mathbf{Z}^2)$, and
   \item $\rho$ be the number of solutions $(i_0,j_0) \in \Delta \cap \mathbf{Z}^2$ to the system of congruences (\ref{congs}).
  \end{itemize}
  If $c_{i_0,j_0} = 0$ for every such solution, then the rank of the Hasse-Witt matrix of $C'_f$ is at most $g - \rho$.
\end{lemma}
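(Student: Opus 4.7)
The plan is to produce, under the hypothesis of the lemma, $\rho$ vanishing rows in an explicit presentation of the Hasse--Witt matrix on the toric basis of $H^0(C'_f,\Omega^1)$. Recall that this basis is
\[
\omega_{(i,j)} \;=\; x^{i} y^{j} \cdot \frac{dx}{xy\,\partial f/\partial y}, \qquad (i,j) \in \Delta^{(1)} \cap \mathbf{Z}^2,
\]
so in particular $g = \#\bigl(\Delta^{(1)}\cap\mathbf{Z}^2\bigr)$ (cf.\ Remark~\ref{remarkgenus}). By the Stöhr--Voloch formula \cite[Thm.~1.1]{stohrvoloch}---applied in its toric version and using that $f^{p-1}=f$ when $p=2$---the matrix of the Cartier operator on this basis is
\[
M_{(i,j),(i',j')} \;=\; c_{2i'-i,\,2j'-j},
\]
with the convention that $c_{a,b}=0$ for $(a,b)\notin\Delta\cap\mathbf{Z}^2$. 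The rank of $M$ coincides with the rank of the Hasse--Witt matrix of $C'_f$, so it suffices to exhibit $\rho$ zero rows.

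As in the proof of Lemma~\ref{combinatoriclemma}, after replacing $f$ by a monomial translate we may assume that $(0,0)$ solves~\eqref{congs}; then every $c_\tau$ is odd and the solutions of~\eqref{congs} in $\mathbf{Z}^2$ form the sublattice
\[
L \;=\; \bigl\{(a,b)\in\mathbf{Z}^2 \,:\, a_\tau a + b_\tau b \text{ is even for every edge }\tau\bigr\}.
\]
No solution lies on the boundary of $\Delta$, because a point on edge $\tau$ satisfies $a_\tau X + b_\tau Y = c_\tau$ exactly, incompatible with the required parity. Hence all $\rho$ solutions inside $\Delta\cap\mathbf{Z}^2$ actually lie in $\Delta^{(1)}\cap\mathbf{Z}^2$, and so index rows of $M$.

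The key parity identity is
\[
a_\tau(2i'-i) + b_\tau(2j'-j) \;=\; 2(a_\tau i' + b_\tau j') - (a_\tau i + b_\tau j) \;\equiv\; a_\tau i + b_\tau j \pmod{2},
\]
so $(2i'-i, 2j'-j) \in L$ if and only if $(i,j) \in L$. Fixing $(i,j) \in L \cap \Delta^{(1)} \cap \mathbf{Z}^2$ and letting $(i',j')$ range over $\Delta^{(1)}\cap\mathbf{Z}^2$, the shifted index $(2i'-i, 2j'-j)$ lies in $L$, so it either belongs to $\Delta\cap\mathbf{Z}^2$---whence $c_{2i'-i,2j'-j}=0$ by hypothesis---or lies outside $\Delta$, where the coefficient vanishes trivially. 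Either way the $(i,j)$-row of $M$ is identically zero, and these $\rho$ distinct zero rows yield $\operatorname{rank}(M) \leq g - \rho$.

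The main obstacle is the Stöhr--Voloch description of $M$ for curves associated with an arbitrary Newton polygon (rather than only plane curves), which is the sole analytic input: it should follow by an essentially identical local computation using $f=0$ on the curve together with $f^{p-1}=f$ in characteristic $2$, after which everything reduces to the purely combinatorial parity bookkeeping described above.
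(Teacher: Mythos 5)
Your proposal is correct and takes essentially the same route as the paper: both work in the toric basis $x^iy^j\,dx/(xy\,\partial f/\partial y)$, observe that the $\rho$ solutions of~\eqref{congs} in $\Delta\cap\ZZ^2$ necessarily lie in $\Delta^{(1)}$, and exhibit $\rho$ zero rows of the Hasse--Witt matrix by the same mod-$2$ parity bookkeeping. The only difference is presentational: where you invoke the full explicit toric St\"ohr--Voloch matrix $(c_{2i'-i,\,2j'-j})$ --- the step you flag as unverified, though it does follow from \cite[Thm.~1.1]{stohrvoloch} applied to the Laurent monomials $h=x^{i-1}y^{j-1}$ --- the paper needs only the kernel criterion from \cite[\S1]{stohrvoloch}, verifying directly that the differentials $z_{i_0,j_0}\,dx$ satisfy $\partial z_{i_0,j_0}/\partial x=0$, which when unwound is exactly your parity computation.
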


\begin{proof}
 By \cite[Cor.~2.6~and~2.7]{CDV}
 we find that
 \begin{equation} \label{basisofdiffs}
  \left\{  x^iy^j \frac{dx}{xy \frac{\partial f}{\partial y}} \right \}_{(i,j) \in \Delta^{(1)} \cap \mathbf{Z}^2}
 \end{equation}
 is a basis for the space of regular differentials on $C'_f$. (If in the $\Delta$-toric case
 the denominator happens to vanish identically, one can
 replace $dx / (\partial f / \partial y)$ by $dy / (\partial f / \partial x)$ as explained in
 Remark~\ref{remarknondegversussmooth}.)
 Assume that $c_{i_0,j_0} = 0$ for each of the $\rho$ solutions
 $(i_0,j_0) \in \Delta \cap \mathbf{Z}^2$ to the system (\ref{congs}).
 Remark that these solutions are all contained in $\Delta^{(1)}$. One then verifies
 that the $\rho$ corresponding differentials $z_{i_0,j_0} dx$, where
 \[ z_{i_0,j_0} = \frac{ x^{i_0}y^{j_0} }{xy \frac{\partial f}{\partial y}}, \]
 satisfy $\partial z_{i_0,j_0} / \partial x = 0$.
 Following the construction from \cite[\S1]{stohrvoloch} we conclude that at least $\rho$ rows of
 the Hasse-Witt matrix with respect to the basis (\ref{basisofdiffs}) are zero.
\end{proof}

As an interesting corollary we obtain:

\begin{corollary} \label{ordinaryimplies2torsion}
  Let $k$ and $\Delta$ be as before and let $f$
  be a $\Delta$-non-degenerate (or $\Delta$-toric) Laurent polynomial over $k$.
  Assume moreover
  that $\Delta$ is not among the polygons excluded in the statement of Theorem~\ref{maintheorem}.
  If $C_f'$ is ordinary then it has a non-trivial $k$-rational $2$-torsion point on its Jacobian.
\end{corollary}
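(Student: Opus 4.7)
The plan is to chain together three results already in hand: Lemma~\ref{solutionsinside}, Lemma~\ref{conjectureeasydirection}, and Theorem~\ref{maintheorem}. The ordinarity hypothesis immediately forces the Hasse--Witt matrix of $C_f'$ to have rank exactly $g = \#(\Delta^{(1)} \cap \ZZ^2)$. So by the contrapositive of Lemma~\ref{conjectureeasydirection}, it cannot be the case that $c_{i_0,j_0} = 0$ for every solution $(i_0,j_0) \in \Delta \cap \ZZ^2$ of the congruence system~\eqref{congs}, provided that at least one such solution exists --- otherwise the rank would be bounded above by $g - \rho \leq g - 1$.

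The existence of a solution is guaranteed by the excluded-polygon hypothesis: since $\Delta$ is not among the polygons listed in the statement of Theorem~\ref{maintheorem}, Lemma~\ref{solutionsinside} provides a solution $(i_0,j_0) \in \Delta \cap \ZZ^2$ to~\eqref{congs}, so the quantity $\rho$ appearing in Lemma~\ref{conjectureeasydirection} is at least $1$. Combining these two observations, there is at least one solution $(i_0,j_0) \in \Delta \cap \ZZ^2$ with $c_{i_0,j_0} \neq 0$.

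This is precisely the condition~\eqref{condi} that appears in the definition of $S_\Delta$. Together with the assumed $\Delta$-non-degeneracy (or $\Delta$-toricity, invoking Remark~\ref{hereagain}) of $f$, Theorem~\ref{maintheorem} then applies to $f$ and delivers a non-trivial $k$-rational $2$-torsion point on $\text{Jac}(C_f')$.

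There is no real obstacle here: the corollary is essentially a formal consequence, with all of the substantive work having been done in Lemma~\ref{conjectureeasydirection}, whose proof exploited the explicit basis of regular differentials coming from~\eqref{basisofdiffs} to relate vanishing of coefficients to vanishing of rows of the Hasse--Witt matrix. The one point one must verify carefully is that the $\rho$ solutions in $\Delta \cap \ZZ^2$ are genuinely interior lattice points (so that the associated differentials lie in the chosen basis), but this was already observed inside the proof of Lemma~\ref{conjectureeasydirection}.
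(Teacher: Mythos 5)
Your proposal is correct and follows essentially the same route as the paper's own proof: Lemma~\ref{solutionsinside} gives $\rho > 0$, ordinarity (equivalently, Hasse--Witt rank equal to $g$, which the paper attributes to Serre) combined with the contrapositive of Lemma~\ref{conjectureeasydirection} yields condition~(\ref{condi}), and Theorem~\ref{maintheorem} then concludes.
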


\begin{proof}
 In view of Lemma~\ref{solutionsinside}, the fact that $\Delta$ is not among the excluded polygons
 ensures that $\rho > 0$. A result by Serre \cite[Prop.\ 10]{serre} says that
 $C_f'$ is ordinary if and only if its Hasse-Witt matrix has rank $g$.
 So the previous lemma implies that if $C'_f$ is ordinary, then (\ref{condi}) is satisfied.
 The claim now follows from Theorem~\ref{maintheorem}.
\end{proof}

\begin{remark} \label{remarkritzenthaler}
The following alternative proof of Corollary~\ref{ordinaryimplies2torsion} was suggested to us by
Christophe Ritzenthaler. A result by St\"ohr and Voloch \cite[Cor.~3.2]{stohrvoloch}
states that the Hasse-Witt matrix has rank $g - h^0(C'_f, \Theta_\text{arith})$.
So if $C_f'$ is ordinary then $h^0(C'_f, \Theta_\text{arith}) = 0$, and in particular $\Theta_\text{arith}$
cannot be linearly equivalent to an effective divisor.
Now if $\Delta$ is not among the excluded polygons, then by Lemma~\ref{solutionsinside} there is at least one solution
$(i_0,j_0) \in \Delta \cap \mathbf{Z}^2$ to the system (\ref{congs}).
Fix such a solution and consider the corresponding translated polygon $(-i_0,-j_0) + \Delta$, as in the proof of
Lemma~\ref{combinatoriclemma}. We again find that all
$\langle \nu_\tau, p_\tau \rangle$'s are odd, but now because
$(0,0) \in (-i_0,-j_0) + \Delta$ we also find that they are strictly negative. In other words the
resulting half-canonical divisor $\Theta_\text{geom}$ is effective. Hence $\Theta_\text{geom}$ and
$\Theta_\text{arith}$ are non-equivalent. Their difference then yields
a non-trivial $k$-rational $2$-torsion point on $\text{Jac}(C_f')$.
\end{remark}

Our guess is that Lemma~\ref{conjectureeasydirection} admits the following converse. This would
give the desired geometric interpretation of condition (\ref{condi}).

\begin{conjecture} \label{hassewittconjecture}
  Let $k$ be a perfect field of characteristic $2$, let
  $\Delta$ be a two-dimensional lattice polygon satisfying the conditions from
  Lemma~\ref{combinatoriclemma}, and let $f$ be a $\Delta$-non-degenerate (or $\Delta$-toric) Laurent polynomial.
  Then the rank of the Hasse-Witt matrix of $C'_f$ is at least $g - \rho$,
  and the bound is attained if and only if $c_{i_0,j_0} = 0$ for every solution
  $(i_0,j_0) \in \Delta \cap \mathbf{Z}^2$ to the system of congruences (\ref{congs}).
\end{conjecture}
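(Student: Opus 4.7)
The plan is to reduce the conjecture to a statement about $h^0(C'_f, \Theta_\text{arith})$ via the St\"ohr-Voloch formula recalled in Remark~\ref{remarkritzenthaler}: the rank of the Hasse-Witt matrix equals $g - h^0(C'_f, \Theta_\text{arith})$. Writing $A$ for the set of solutions to~\eqref{congs} lying in $\Delta \cap \mathbf{Z}^2$, the conjecture then becomes equivalent to the upper bound $h^0(C'_f, \Theta_\text{arith}) \le \rho$ together with strict inequality as soon as at least one coefficient $c_{i_0,j_0}$ with $(i_0,j_0)\in A$ is non-zero. The matching reverse inequality $h^0 \ge \rho$ under the vanishing hypothesis is already implicit in Lemma~\ref{conjectureeasydirection} combined with St\"ohr-Voloch.

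Translating so that $(0,0)\in A$, I would next parametrize $H^0(\Theta_\text{arith})$ concretely. By Lemma~\ref{conjectureeasydirection} every regular differential on $C'_f$ has the form $\omega_P := (P/f_y)\, dx$ with $P$ a Laurent polynomial supported on $\Delta^{(1)}-(1,1)$. A function $\phi \in k(C'_f)$ is a global section of $\mathcal{O}(\Theta_\text{arith})$ iff $\phi^2\, dx$ is regular, iff the representing polynomial $P$ has the property that $P/f_y$ (equivalently $P f_y$) is a square in $k(C'_f)$, iff there exists a Laurent polynomial $R$ with $R^2 \equiv P f_y \pmod{f}$. Because over a perfect field of characteristic $2$ the squares in $k[x^{\pm 1},y^{\pm 1}]$ are exactly the Laurent polynomials supported on $2\mathbf{Z}^2$, the existence of such an $R$ translates into a condition on the $\mathbf{Z}^2/2\mathbf{Z}^2$-support of a canonical reduction of $P f_y$ modulo $f$.

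From here the plan is combinatorial: set up a canonical normal form modulo $f$ that respects the Newton polygon (pivoting on a vertex of $\Delta$ at which $c_{a,b}\neq 0$ and iteratively clearing higher-support monomials by subtracting monomial multiples of $f$), and count the resulting linear constraints on $P$. Each lattice point $(i_0,j_0)\in \Delta \cap \mathbf{Z}^2 \setminus A$ should contribute one independent vanishing constraint, which would give $h^0(\Theta_\text{arith}) \le \rho$; the sharpness direction should then follow by showing that a non-zero $c_{i_0,j_0}$ with $(i_0,j_0)\in A$ forces an additional independent constraint inside the $A$-block, strictly lowering the dimension. The main obstacle is precisely this combinatorial analysis: the normal-form reduction modulo $f$ couples the coefficients of $P f_y$ across $\mathbf{Z}^2/2\mathbf{Z}^2$-classes in a way that depends on all of $f$, not only on the coefficients $c_{i_0,j_0}$ with $(i_0,j_0)\in A$. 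Decoupling it would likely require a filtration argument compatible with the toric degeneration $f \rightsquigarrow f_\tau$, supplemented by a case-by-case treatment of the small polygons surfacing in the classification behind Lemma~\ref{solutionsinside}; this seems to be precisely why the authors only commit to a proof in special cases.
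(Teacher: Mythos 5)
First, a point of order: the statement you are proving is labelled a \emph{conjecture} in the paper. The authors present it explicitly as a guess and establish it only in three special cases (Theorems~\ref{conjsmoothplane}, \ref{conjhyp} and~\ref{conjexceptional}), so there is no general proof in the paper to compare against. Your opening reduction is sound and coincides with the first step the authors take in each of those cases: by the St\"ohr--Voloch formula the Hasse--Witt rank equals $g - h^0(C'_f,\Theta_\text{arith})$, so the conjecture is equivalent to the upper bound $h^0(C'_f,\Theta_\text{arith}) \le \rho$ together with strictness whenever some $c_{i_0,j_0}\neq 0$, the matching lower bound under the vanishing hypothesis being supplied by Lemma~\ref{conjectureeasydirection}.

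Everything after that, however, is a plan rather than a proof, and the plan stops exactly at the hard part. Two concrete issues. First, your criterion for membership in $H^0(\Theta_\text{arith})$ is imprecise: being a square in the function field $k(C_f)$ is not the same as being congruent to a square modulo $f$ in $k[x^{\pm 1},y^{\pm 1}]$; as in the proof of Theorem~\ref{maintheorem} one must allow an auxiliary denominator, i.e.\ an identity of the form $S^2\, P\, \partial f/\partial y \, + \alpha f = R^2$ with $f \nmid S$, and this extra freedom is precisely what makes the subsequent ``constraint counting'' delicate. Second, and more seriously, the assertion that each lattice point of $(\Delta\cap\mathbf{Z}^2)\setminus A$ contributes one independent linear condition \emph{is} the content of the conjecture: the bound $h^0(\Theta_\text{arith})\le\rho$ is a Clifford/Brill--Noether-type statement specific to the canonical theta characteristic (for small $\rho$ it is false for general divisors of degree $g-1$), and the paper only obtains it by case-specific means --- Hartshorne's theorem on generalized divisors for the standard simplex, an identification of the Hasse--Witt matrix with a Sylvester matrix in the hyperelliptic case, and a direct determinant computation for the exceptional quartics. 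You concede this yourself in your closing sentences. As written, the proposal establishes nothing beyond what Lemma~\ref{conjectureeasydirection} and Remark~\ref{remarkritzenthaler} already give, and the statement remains open.
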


We can prove this conjecture in a number of special cases. Because the statements
seem interesting in their own right, we will each time reformulate (and sometimes refine) Conjecture~\ref{hassewittconjecture}
accordingly.

\begin{theorem}[Conjecture~\ref{hassewittconjecture} for smooth plane curves of odd degree] \label{conjsmoothplane}
 Let $k$ be a perfect field of characteristic $2$, let $d \geq 3$ be an odd integer and let
 $f = \sum_{i + j \leq d} c_{i,j} x^i y^j \in k[x,y]$
 define a smooth plane projective curve $C/k$ of degree $d$ and genus $g = (d-1)(d-2)/2$.
 Then the rank of the Hasse-Witt matrix of $C$ is bounded from below by
                \[ g - \frac{d^2 - 1}{8} = \frac{3}{8} (d-1)(d-3) \]
 Furthermore equality holds if and only if $c_{i,j} = 0$ as soon as $i$ and $j$ are odd.
\end{theorem}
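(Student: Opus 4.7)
My plan is to combine the Stöhr--Voloch explicit description of the Hasse--Witt matrix with a characteristic-$2$ parity decomposition of $f$, reducing the rank statement to a polynomial syzygy problem that can be attacked via the smoothness of $f$.

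First, by the Stöhr--Voloch formula~\cite[Thm.~1.1]{stohrvoloch}, in the basis $\omega_{i,j} = x^{i-1} y^{j-1} \, dx/f_y$ of $H^0(\Omega^1_C)$ indexed by $I = \{(i,j) : i, j \geq 1,\, i + j \leq d - 1\}$, the Hasse--Witt matrix has entries $A_{(i,j),(k,l)} = c_{2k-i,\, 2l-j}$ (with $c_{m,n} = 0$ if the index falls outside the simplex). Since the parity of $(2k-i, 2l-j)$ matches that of $(i, j)$, the $\rho$ rows indexed by odd-odd $(i, j)$ involve only odd-odd coefficients of $f$; this immediately recovers Lemma~\ref{conjectureeasydirection}, giving $\mathrm{rank}\,A \leq g - \rho$ whenever all such $c_{m,n}$ vanish.

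For the lower bound $\mathrm{rank}\,A \geq g - \rho$, I would analyze the kernel of the Cartier operator via the characteristic-$2$ decomposition $f = P^2 + xR^2 + yQ^2 + xyS^2$, where $P, Q, R \in k[x, y]_{\leq (d-1)/2}$ and $S \in k[x, y]_{\leq (d-3)/2}$. Any $\omega \in \ker \mathcal{C}$ has the form $\omega = s^2 \, dx$, and the numerator $h$ of $\omega = h \cdot dx/f_y$ decomposes analogously as $h = A^2 + yB^2 + xC^2 + xyD^2$. A direct monomial-matching in characteristic $2$ shows that the odd-odd part of $h f$ factors as $xy \cdot (AS + BR + CQ + DP)^2$, so the Cartier kernel condition (vanishing of the odd-odd coefficients of $hf$ at interior lattice points) translates into the identity
\[
AS + BR + CQ + DP = 0 \quad \text{in } k[x, y],
\]
with $A \in k[x, y]_{\leq (d-3)/2}$ (of dimension $\rho$) and $B, C, D \in k[x, y]_{\leq (d-5)/2}$. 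The source and target $k[x, y]_{\leq d-3}$ of the associated linear map $\phi$ both have dimension $g$, so the theorem reduces to the claim $\dim \ker \phi \leq \rho$, with equality iff $S \equiv 0$.

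When $S = 0$ (i.e.\ all odd-odd $c_{i,j}$ vanish), the variable $A$ contributes a $\rho$-dimensional free subspace to $\ker \phi$. Smoothness of $f = P^2 + xR^2 + yQ^2$ yields $\partial f/\partial x = R^2$ and $\partial f/\partial y = Q^2$, forcing $(P, Q, R)$ to have no common zero in $\PP^2_k$ (both in the affine part and, by a direct check of partial derivatives at the line at infinity, at infinity). Standard Koszul syzygy theory for such triples of forms of degree $(d-1)/2$ in three variables with no common zero shows that their syzygy module is generated in degree $\geq (d-1)/2 > (d-5)/2$, so $(B, C, D) \mapsto BR + CQ + DP$ is injective on the prescribed range, and $\dim \ker \phi = \rho$. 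For the remaining case $S \neq 0$, the plan is to show $\dim \ker \phi < \rho$ strictly by exploiting the refined smoothness criterion: the singular points of $f$ with $S \neq 0$ are precisely $((Q/S)^2, (R/S)^2)$ where $\det \begin{pmatrix} P & Q \\ R & S \end{pmatrix} = PS + QR$ vanishes, so smoothness forces $PS + QR \neq 0$ at these candidate loci. I would convert this pointwise non-degeneracy into the global statement that $S \cdot k[x, y]_{\leq (d-3)/2}$ is not entirely contained in the image of $(B, C, D) \mapsto BR + CQ + DP$, producing at least one extra dimension in the image of $\phi$.

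The main obstacle is this last step: converting the pointwise non-vanishing of $PS + QR$ on an explicit zero-dimensional locus into a global lower bound on the image of $\phi$, or equivalently an upper bound on the low-degree syzygies of the $2 \times 2$ polynomial matrix $\begin{pmatrix} P & Q \\ R & S \end{pmatrix}$ in $k[x, y]$. This likely requires a careful Koszul-style analysis adapted to the $2 \times 2$ structure and the inseparability-friendly setting of characteristic $2$.
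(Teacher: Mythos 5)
Your reduction is correct as far as it goes: the parity decomposition $f=P^2+xR^2+yQ^2+xyS^2$, the identification of the odd--odd part of $hf$ with $xy(AS+BR+CQ+DP)^2$, and the dimension counts ($\dim k[x,y]_{\leq (d-3)/2}=(d^2-1)/8=\rho$, three copies of $k[x,y]_{\leq(d-5)/2}$ giving $g-\rho$) all check out, so the theorem is indeed equivalent to $\dim\ker\phi\leq\rho$ with equality iff $S=0$. Your treatment of the case $S=0$ is also sound: homogenizing gives $F=Z\tilde P^2+X\tilde R^2+Y\tilde Q^2$, the partials are $\tilde R^2,\tilde Q^2,\tilde P^2$, so smoothness forces $\tilde P,\tilde Q,\tilde R$ to be a regular sequence of forms of degree $(d-1)/2$, whose first syzygies live in degree $d-1$, strictly above the degree $d-3$ available to $(B,C,D)$. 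But the part you flag as ``the main obstacle'' is not a technical loose end --- it is the actual content of the theorem. Both the unconditional lower bound $\operatorname{rank}\geq g-\rho$ (i.e.\ $\dim\ker\phi\leq\rho$ for arbitrary $S$) and the strict inequality when $S\neq 0$ are exactly the statements you have not proved, and the pointwise smoothness condition on the $2\times 2$ matrix $\bigl(\begin{smallmatrix}P&Q\\R&S\end{smallmatrix}\bigr)$ does not obviously globalize: bounding the low-degree syzygies of four forms in three variables (which, unlike three forms with no common zero, always admit syzygies) is a genuine Hilbert-function problem, and nothing in your outline controls it.

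The paper closes precisely this gap by a completely different, non-computational route. By St\"ohr--Voloch \cite[Cor.~3.2]{stohrvoloch} the corank of the Hasse--Witt matrix equals $h^0(C,\Theta_\text{arith})$, where $\Theta_\text{arith}$ is the canonical theta characteristic (equivalently, $\dim\ker\phi=h^0(C,\Theta_\text{arith})$ in your notation). Hartshorne's Brill--Noether theorem for smooth plane curves \cite[Thm.~2.1]{hartshorne} gives $h^0(C,D)\leq (d^2-1)/8$ for \emph{every} degree-$(g-1)$ divisor $D$, with equality if and only if $D\sim\frac{d-3}{2}H=\Theta_\text{geom}$; and the proof of Theorem~\ref{maintheorem} shows $\Theta_\text{arith}\not\sim\Theta_\text{geom}$ as soon as some $c_{i,j}$ with $i,j$ both odd is non-zero. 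This yields both the bound and the ``only if'' direction at once, while your Koszul computation is an explicit stand-in for the ``if'' direction (which the paper gets from Lemma~\ref{conjectureeasydirection}). If you want to salvage your approach, the cleanest fix is to recognize $\ker\phi$ as $H^0(C,\Theta_\text{arith})$ and import Hartshorne's bound rather than attacking the syzygy module directly; as a purely syzygy-theoretic statement your remaining claim appears to be essentially equivalent to that Brill--Noether input.
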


\begin{proof}
  Recall from Remark~\ref{remarkritzenthaler} that St\"ohr and Voloch \cite[Cor.~3.2]{stohrvoloch} proved that
  the rank of the Hasse-Witt matrix is $g - h^0(C, \Theta_\text{arith})$.
  By the Brill-Noether theory of smooth plane curves \cite[Thm.~2.1]{hartshorne}
  we have
  \begin{equation} \label{brillnoetherupperbound}
   h^0(C, D) \leq \frac{ \frac{d-1}{2} \frac{d+1}{2}  }{2}
   = (d^2 - 1)/8
  \end{equation}
  for any divisor $D$ on $C$ of degree $g-1$. In particular this also holds
  for $D = \Theta_\text{arith}$, from which the lower bound follows.
  As for the last statement, by \cite[part 2b of Thm.~2.1]{hartshorne}
  the bound in (\ref{brillnoetherupperbound}) is attained
  if and only if $D$ is in the class of $\frac{d-3}{2}H$, i.e.\ if and only if
  $D \sim \Theta_\text{geom}$.
  But the proof of Theorem~\ref{maintheorem} (or of \cite[Thm.~4.2]{CEZB}) is
  precisely about showing that if $c_{i,j} \neq 0$ for some $i$ and $j$ that are both odd, then
  $\Theta_\text{arith} \not \sim \Theta_\text{geom}$. This yields the `only if' part, while
  the `if' part follows from Lemma~\ref{conjectureeasydirection}.
\end{proof}

\begin{theorem}[Conjecture~\ref{hassewittconjecture} for hyperelliptic curves of odd genus] \label{conjhyp}
 Let $k$ be a perfect field of characteristic $2$. Let $C$ be a hyperelliptic
 curve of odd genus $g \geq 3$, given in weighted projective form by
\begin{equation}\label{eq:hypmodel}
 C \colon \quad Y^2 + H_1(X,Z)Y = H_0(X,Z),
\end{equation}
where $H_1$
and $H_0$ in $k[X,Z]$ are homogeneous
of degrees $g+1$ and $2g+2$ respectively.
 Then the rank of the Hasse-Witt matrix of $C$ equals
 \[ g - \frac{1}{2} \deg \gcd \left( H_1, Z^{-1} \frac{\partial}{\partial X} H_1 \right). \]
  In particular, it
 is bounded from below by
 \[ g - \frac{g + 1}{2} = \frac{g - 1}{2}, \]
 where equality holds if and only if $\frac{\partial}{\partial X} H_1 = 0$.
\end{theorem}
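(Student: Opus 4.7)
The strategy is to apply the St\"ohr--Voloch identity $\operatorname{rank}(HW) = g - h^0(C, \Theta_{\text{arith}})$ (cited in Remark~\ref{remarkritzenthaler}) and to compute $h^0(C, \Theta_{\text{arith}})$ by hand. Set $h_1(x) := H_1(x,1)$ and $h_0(x) := H_0(x,1)$. By an automorphism of $\PP^1_k$ acting on $(X:Z)$ we may assume that $\deg h_1 = g + 1$.

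I would first pin down an explicit basis of $H^0(C, K_C)$. At a zero $\alpha$ of $h_1$ of multiplicity $e$, smoothness of $C$ forces $h_0'(\alpha) \ne 0$, and $v := y - \sqrt{h_0(\alpha)}$ is a uniformizer. A characteristic-$2$ expansion of $x - \alpha$ in powers of $v$ shows inductively that every odd power of $v$ below $v^{2e+1}$ has zero coefficient, while the coefficient of $v^{2e+1}$ is nonzero. Hence $\operatorname{ord}_v(dx) = 2e = \operatorname{ord}_v(h_1)$, so $dx/h_1$ is regular at every ramification point of $\pi\colon C \to \PP^1_k$. An analogous computation at the points above $x = \infty$ yields $\operatorname{ord}(dx/h_1) = g - 1$, and a degree count gives the basis
\[
\Bigl\{\, x^i\, \frac{dx}{h_1}\ :\ 0 \le i \le g - 1 \,\Bigr\}
\]
of $H^0(C, K_C)$.

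Next, $f \in H^0(C, \Theta_{\text{arith}})$ iff $f^2\, dx$ is holomorphic, i.e.\ $f^2 h_1 \in k[x]$ with $\deg(f^2 h_1) \le g - 1$. Writing $f = u + vy$ with $u, v \in k(x)$ and expanding $f^2 = u^2 + v^2 h_0 + v^2 h_1\, y$ in characteristic $2$, the requirement $f^2 \in k(x)$ forces $v = 0$, so $f \in k(x)$. Factor $h_1 = S(x)^2 U(x)$ in $k[x]$, where $U$ is the product, each to exponent one, of the irreducible factors of $h_1$ appearing to an odd exponent. If $f = R/T \in k(x)$ is in lowest terms, then $f^2 h_1 \in k[x]$ forces $T^2 \mid S^2 U$; as $U$ is squarefree, $T \mid S$, so $f = A/S$ for some $A \in k[x]$. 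The degree bound on $f^2 h_1 = A^2 U$ translates to $\deg A \le \deg S - 1$, and conversely any such $A$ yields an admissible $f$. Hence $h^0(C, \Theta_{\text{arith}}) = \deg S$.

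For each irreducible $p \in k[x]$ with $p^e \| h_1$, perfectness of $k$ gives $p' \ne 0$ (otherwise $p$ would itself be a square); thus $(p^e)' = e p^{e-1} p'$ vanishes when $e$ is even and has valuation exactly $e - 1$ at the place $p$ when $e$ is odd. Summing contributions yields $\gcd(h_1, h_1') = S^2$. Using that $g$ odd forces $Z \mid \partial H_1/\partial X$ together with $(Z^{-1}\partial H_1/\partial X)(x,1) = h_1'(x)$, we obtain
\[
\operatorname{rank}(HW) \;=\; g - \deg S \;=\; g - \tfrac{1}{2}\deg\gcd\!\bigl(H_1,\, Z^{-1}\partial H_1/\partial X\bigr).
\]
The bound $(g-1)/2$ then follows from $\deg S \le \deg(h_1)/2 = (g+1)/2$, with equality iff $U$ is a nonzero constant, iff $h_1$ is a perfect square in $k[x]$, iff $h_1' = 0$.

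The main obstacle is the first step: verifying the explicit basis of holomorphic differentials when $h_1$ has roots of possibly high multiplicity (requiring a careful Taylor expansion in a local uniformizer) and accommodating any non-split or ramified behavior at infinity. Once the basis is secured, the rest is a straightforward manipulation of polynomials in $k[x]$.
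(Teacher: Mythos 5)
Your argument is correct in substance but follows a genuinely different route from the paper. The paper writes $H_1=P^2+XZQ^2$ (so that $\partial H_1/\partial X=ZQ^2$), computes the Hasse--Witt matrix explicitly with respect to the basis~(\ref{basisofdiffs}), observes that it is the Sylvester matrix of $P$ and $Q$ up to row permutation, and concludes via corank of a resultant matrix $=\deg\gcd(P,Q)=\frac12\deg\gcd(H_1,Z^{-1}\partial H_1/\partial X)$. You instead invoke the St\"ohr--Voloch identity $\operatorname{rank}(HW)=g-h^0(C,\Theta_{\text{arith}})$ (the same identity the paper uses in Remark~\ref{remarkritzenthaler} and in Theorem~\ref{conjsmoothplane}) and compute $h^0(C,\Theta_{\text{arith}})$ directly as the space of $f$ with $f^2\,dx$ regular, arriving at $\deg S$ where $h_1=S^2U$ with $U$ squarefree; since $\gcd(h_1,h_1')=S^2$ over a perfect field, the two answers agree. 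What your approach buys is a conceptual description of the kernel (it identifies $|\Theta_{\text{arith}}|$ with $\{A/S:\deg A\le\deg S-1\}$) without ever writing down the matrix; what the paper's approach buys is the matrix itself, in recognizable Sylvester form. Note that the ``main obstacle'' you identify --- verifying that $\{x^i\,dx/h_1\}_{0\le i\le g-1}$ is a basis of $H^0(C,K_C)$ --- is exactly the content of the basis~(\ref{basisofdiffs}) from \cite[Cor.~2.6--2.7]{CDV} applied to the hyperelliptic polygon, so you could simply cite it rather than redo the local expansions.

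Two small points need patching. First, the opening reduction ``by an automorphism of $\PP^1_k$ we may assume $\deg h_1=g+1$'' can fail over $\FF_2$ or $\FF_4$ when $H_1$ vanishes at every $k$-rational point of $\PP^1$; since both the rank of the Hasse--Witt matrix and $\deg\gcd(H_1,Z^{-1}\partial H_1/\partial X)$ are unchanged under extension of the perfect base field, you should say you first pass to a large enough extension (or else account for the factor of $Z$ at infinity directly). Second, at a simple root $\alpha$ of $h_1$ smoothness forces $h_1'(\alpha)\sqrt{h_0(\alpha)}+h_0'(\alpha)\ne 0$ rather than $h_0'(\alpha)\ne 0$; the constant $c$ in your local expansion should be this quantity, which does not affect the conclusion $\operatorname{ord}_v(dx)=2e$.
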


\begin{proof}
 Write $H_1 = \sum_{i = 0}^{g+1} c_i X^iZ^{g+1 - i}$ and define
 \[ P(X,Z) = \sum_{i=0}^{(g+1)/2} c_{2i} X^iZ^{(g+1)/2 - i} \qquad \text{and}
 \qquad Q(X,Z) = \sum_{i=0}^{(g-1)/2} c_{2i+1} X^i Z^{(g-1)/2 - i}.
\]
Note that $H_1 = P^2 + XZQ^2$ and $\frac{\partial}{\partial X} H_1 = ZQ^2$.
Now the polynomial $f = y^2 + H_1(x,1)y + H_0(x,1)$ is $\Delta$-toric,
 where $\Delta = \text{conv} \{ (0,0), (2g+2,0), (0,2) \}$; here $C_f'$ is nothing else but $C$.
 An explicit computation shows that the Hasse-Witt matrix with respect to the basis (\ref{basisofdiffs})
 equals, up to a reordering of the rows, the Sylvester matrix of $P$ and $Q$.
 It is well-known
 that the corank of the Sylvester matrix of two polynomials equals the degree of their greatest
 common divisor, which in our case equals
 \[ \deg \gcd (P,Q) = \frac{1}{2} \deg \gcd (P^2, Q^2) = \frac{1}{2} \deg \gcd \left( H_1, Z^{-1} \frac{\partial}{\partial X} H_1 \right). \]
 The remaining claims follow immediately.
\end{proof}

\begin{remark}
  This indeed implies Conjecture~\ref{hassewittconjecture} for hyperelliptic curves
  of odd genus because $\frac{\partial}{\partial X} H_1 = 0$ if and only if all terms $c_{i,j}x^iy^j$ in
  $f = y^2 + H_1(x,1)y + H_0(x,1)$ with $i$ and $j$ odd are $0$.
\end{remark}

\begin{remark}
  The lower bound $(g-1)/2$ holds for arbitrary curves $C$ of genus $g$ (not necessarily odd) over fields
  of characteristic $2$, and it can be attained
  by hyperelliptic curves only. This follows from Clifford's theorem, as explained in \cite[Cor.\,3.2]{stohrvoloch}.
\end{remark}

\begin{theorem}[Conjecture~\ref{hassewittconjecture} for the exceptional polygons] \label{conjexceptional}
 Let $k$ be a perfect field of characteristic $2$, let $\Delta$ be one of
 the polygons
 \begin{center}
\psset{unit=0.3cm}
\begin{pspicture}(-1,-3.2)(5,4)
\pspolygon[fillstyle=solid,linecolor=black](1,0)(0,3)(3,1)
\psline{->}(-1,0)(5,0) \psline{->}(0,-1)(0,4)
\rput(1,-0.55){\small $1$} \rput(-0.5,3){\small $3$}
\rput(3.7,1.9){\small $(3,1)$}
\end{pspicture}
\qquad \qquad
\begin{pspicture}(0,-3.2)(4,4)
\rput(2,1.5){or}
\end{pspicture}
\qquad \qquad
\begin{pspicture}(-4,-3.2)(6,4)
\pspolygon[fillstyle=solid,linecolor=black](0,0)(6,1)(2,2)(0,1)
\psline{->}(-1,0)(7,0) \psline{->}(0,-1)(0,4)
\rput(-0.5,1){\small $1$}
\rput(-0.5,-0.6){\small $0$}
\rput(3,-1.8){\footnotesize for some $0 \leq k < \ell \geq 3 $}
\rput(3,-2.9){\footnotesize with $k$ even and $\ell$ odd}
\rput(7.6,1){\small $(\ell,1)$}
\rput(2,2.7){\small $(k,2)$}
\end{pspicture}
\end{center}
 that were excluded in the statement of Theorem~\ref{maintheorem},
 and let $f \in k[x^{\pm 1}, y^{\pm 1}]$ be $\Delta$-non-degenerate (or $\Delta$-toric).
 Then
 the rank of the Hasse-Witt matrix of $C'_f$ is equal to $g = \# (\Delta^{(1)} \cap \mathbf{Z}^2)$.
 In particular $C_f'$ is ordinary.
\end{theorem}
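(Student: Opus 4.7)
The hypothesis singles out two very different curve families. For the trapezoid $\Delta = \conv\{(0,0),(\ell,1),(k,2),(0,1)\}$ (with $k$ even and $\ell$ odd), the interior lattice points lie on the line $y=1$, so by Remark~\ref{remarkgenus} $C_f'$ is hyperelliptic of genus $g = \ell - 1$. For the triangle $\Delta = \conv\{(1,0),(0,3),(3,1)\}$, the three interior points $(1,1),(1,2),(2,1)$ are not collinear, so $C_f'$ is non-hyperelliptic of genus $3$, canonically embedded as a smooth plane quartic. I would handle the two cases separately.

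For the trapezoid, the substitution $\tilde{y} = c_{k,2}\,x^k y$ turns $f = 0$ into the hyperelliptic form $\tilde{y}^2 + h(x)\,\tilde{y} + c_{0,0}c_{k,2}\,x^k = 0$, with $h(x) = \sum_{i=0}^{\ell} c_{i,1}x^i$ of degree $\ell = g+1$. Although Theorem~\ref{conjhyp} is stated for odd $g$, its proof relies only on $H_1 = h$ and the decomposition $H_1 = P^2 + XZ\,Q^2$, which still works for even $g$ (with the top-degree monomial shifting from $P^2$ to $XZ\,Q^2$); the rank formula $g - \tfrac{1}{2}\deg\gcd(h,\partial h/\partial x)$ carries over. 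So ordinarity reduces to $\gcd(h,h') = 1$. Since $\partial f/\partial y = h$ and $\partial f/\partial x = h'\,y$ (using that $k$ is even), the system $f = \partial f/\partial x = \partial f/\partial y = 0$ in $\TT_k^2$ forces $h$ and $h'$ to share a root in $\bar{k}^*$, contradicting face non-degeneracy; vertex non-degeneracy gives $h(0) = c_{0,1} \neq 0$, excluding also the root $0$.

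For the triangle, I would exhibit the canonical model explicitly. The basis $\omega_{i,j} = x^i y^j\,dx/(xy\,\partial f/\partial y)$ indexed by $(i,j) \in \{(1,1),(1,2),(2,1)\}$ satisfies $[\omega_{1,1}:\omega_{1,2}:\omega_{2,1}] = [1:y:x]$, so setting $[U:V:W] = [1:y:x]$ realizes $C_f'$ as the plane quartic
\[
F \;=\; aWU^3 + bUV^3 + cVW^3 + dU^2VW + eUV^2W + mUVW^2,
\]
with $(a,b,c,d,e,m) = (c_{1,0},c_{0,3},c_{3,1},c_{1,1},c_{1,2},c_{2,1})$. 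The St\"ohr-Voloch formula for the Hasse-Witt matrix in the monomial basis $\{U,V,W\}$ of $H^0(\mathbf{P}^2,\mathcal{O}(1))$ gives, after direct bookkeeping,
\[
A \;=\; \begin{pmatrix} d & a & 0 \\ 0 & e & b \\ c & 0 & m \end{pmatrix},
\qquad \det A \;=\; dem + abc,
\]
so ordinarity reduces to $dem + abc \neq 0$. Here the key identity is $f = x\,\partial f/\partial x + y\,\partial f/\partial y + xy(cx^2+d)$ in characteristic $2$: any torus singularity of $f$ forces $cx^2 + d = 0$, hence $x^2 = d/c$; then $\partial f/\partial y = 0$ forces $y^2 = md/(bc)$, and finally $\partial f/\partial x = 0$ forces $a = emd/(bc)$, i.e.\ $abc + dem = 0$. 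Hence $dem + abc = 0$ would produce a singular point in $\TT_k^2$, contradicting $\Delta$-non-degeneracy.

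The main technical obstacle is verifying that the proof of Theorem~\ref{conjhyp} really extends to even $g$: one has to redo the Sylvester matrix identification with the parity-adapted decomposition $h = P^2 + xQ^2$, where $\deg Q = g/2$ but $\deg P$ can be strictly less, and check that the $g \times g$ matrix still satisfies the $\gcd$-rank formula. A secondary nuisance is pinning down the ordering of the toric basis \eqref{basisofdiffs} relative to the monomial basis $\{U,V,W\}$ so that the matrix $A$ comes out with the correct conventions (its determinant, and hence the final conclusion, is convention-independent).
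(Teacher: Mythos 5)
Your proposal is correct and follows the same two-pronged strategy as the paper: reduce the trapezoids to a hyperelliptic Weierstrass model and invoke the rank formula of Theorem~\ref{conjhyp}, and reduce the triangle to a plane quartic whose $3\times 3$ Hasse--Witt matrix, read off from St\"ohr--Voloch, has determinant $c_{1,1}c_{2,1}c_{1,2}+c_{1,0}c_{3,1}c_{0,3}$. You depart from (and improve on) the paper in two places. First, the paper settles the quartic case by asserting that ``with the aid of a computer algebra package one can verify that this determinant is non-zero (using that the curve is smooth)''; your Euler-type identity $f=x\,\partial f/\partial x+y\,\partial f/\partial y+xy(cx^2+d)$ in characteristic $2$ turns this into a transparent hand computation, and it is watertight: if $dem+abc=0$ then, since $a,b,c$ are vertex coefficients and hence non-zero, also $d,e,m\neq 0$, so the point you construct genuinely lies in $\TT_k^2$ and contradicts non-degeneracy. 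Second, you correctly flag that the paper's closing appeal to ``the previous theorem'' applies a statement proved only for odd genus to curves of even genus $g=\ell-1$; the paper passes over this in silence. The obstacle you identify is real but routine to fill: since $c_{\ell,1}\neq 0$ is a vertex coefficient, the affine decomposition $h=p^2+xq^2$ has $\deg q=g/2$ exactly, the relevant Sylvester matrix is again $g\times g$, and the corank-equals-$\deg\gcd$ statement for binary forms survives because $Q$ has non-vanishing leading coefficient (homogeneously the decomposition becomes $H_1=ZP^2+XQ^2$ rather than $P^2+XZQ^2$, which is the precise form of the ``shift'' you describe affinely). Finally, your derivation of $\gcd(h,h')=1$ from torus non-degeneracy (using $\partial f/\partial x=h'y$ because $k$ is even, plus $h(0)=c_{0,1}\neq 0$ at the vertex $(0,1)$) is equivalent to the paper's remark that a repeated factor of $h_1$ would produce an affine singularity.
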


\begin{proof}
 The polygon on the left corresponds to smooth plane quartics of the form
 \[ c_{1,0} XZ^3 + c_{1,1}XYZ^2 + c_{2,1}X^2YZ + c_{3,1}X^3Y + c_{1,2}XY^2Z + c_{0,3}Y^3Z. \]
 The Hasse-Witt matrices of smooth plane quartics are explicitly described at the end of \cite[\S3]{stohrvoloch}. In our case
 this gives
 \[ \begin{pmatrix} c_{1,1} & c_{3,1} & 0 \\ 0 & c_{2,1} & c_{0,3} \\ c_{1,0} & 0 & c_{1,2} \\ \end{pmatrix} \]
 with determinant $c_{1,1}c_{2,1}c_{1,2} + c_{1,0}c_{3,1}c_{0,3}$.
 With the aid of a computer algebra package
 one can verify that this determinant is non-zero (using that the curve is smooth).

 As for the polygons on the right, we have that $f = c x^ky^2 + h_1(x)y + c'$
 for non-zero $c,c' \in k$ and a degree $\ell = g+1$ polynomial $h_1(x) \in k[x]$. Substituting
 $y \leftarrow yx^{-k}$ and multiplying the equation by $c^{-1}x^k$ puts our curve in the Weierstrass form
 \[ y^2 + c^{-1}h_1(x)y + c^{-1}c'x^k. \]
 Using that $k$ is even one sees that $h_1(x)$ is square-free (otherwise there would be an affine
 singularity). The result then follows from the previous theorem.
\end{proof}

A fun corollary is the following geometric sufficient condition for ordinarity. Remark that similar
conditions have been described before (such as the existence of $7$ bitangent lines, which
is actually sufficient and necessary; see \cite[\S3]{stohrvoloch}).

\begin{corollary}
  Let $C$ be a smooth plane quartic curve over a field $k$ of characteristic $2$ admitting three non-colinear inflection points,
  such that the corresponding tangent lines are
  precisely the lines through two of these points.
\begin{center}
\psset{unit=0.4cm}
  \begin{pspicture}(-1,0)(7,7)
\psline(0,0)(3,6)
\psline(-1,2)(7,1)
\psline(6,0)(1,6)
\psccurve[linewidth=1pt](4.75,1.24)(5,1.24)(5.25,1.24)(6.5,2)(3,4)(2.4,4.35)(2.25,4.5)(2.1,4.65)(0.5,4.7)(1,3)(0.95,2)(0.9,1.75)(0.85,1.5)(2,0.4)(3.6,1)
\end{pspicture}
\end{center}
  Then $C$ is ordinary.
\end{corollary}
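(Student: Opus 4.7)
The strategy is to reduce to the first (triangular) case of Theorem~\ref{conjexceptional}: I will exhibit, after a projective change of coordinates on $\PP_k^2$, a non-degenerate equation for $C$ whose Newton polygon is unimodularly equivalent to $\conv\{(1,0),(3,1),(0,3)\}$.

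The three tangent lines $L_1, L_2, L_3$ at the flexes are pairwise distinct, since two coincident flex-tangents would give $\geq 6$ intersection points with $C$ by B\'ezout. Hence the hypothesis produces a well-defined permutation $\sigma$ of $\{1,2,3\}$, where $P_{\sigma(i)}$ is the second flex lying on $L_i$; it is fixed-point-free and hence a $3$-cycle. After relabeling, $\sigma$ sends $i$ to $i+1 \pmod 3$. I then choose projective coordinates sending $(P_1, P_2, P_3)$ to the fundamental points $(1{:}0{:}0), (0{:}1{:}0), (0{:}0{:}1)$, so that $L_1$ is $Z=0$, $L_2$ is $X=0$ and $L_3$ is $Y=0$. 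The flex condition at each $P_i$ translates into the divisibilities $Y^3 \mid F(X,Y,0)$, $Z^3 \mid F(0,Y,Z)$, $X^3 \mid F(X,0,Z)$ (where $F$ is the defining quartic), and combining these forces $F$ to take the form
\[
 F = c_{130}\,XY^3 + c_{013}\,YZ^3 + c_{301}\,X^3Z + c_{211}\,X^2YZ + c_{121}\,XY^2Z + c_{112}\,XYZ^2.
\]
Moreover, the three ``vertex'' coefficients $c_{130}, c_{013}, c_{301}$ are necessarily non-zero: the vanishing of any one of them would force the corresponding variable $X$, $Y$ or $Z$ to divide $F$, contradicting the irreducibility of the smooth quartic.

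Dehomogenizing by $Z=1$ then yields a Laurent polynomial $f(x,y)$ whose Newton polygon is $\Delta = \conv\{(0,1),(1,3),(3,0)\}$. Interchanging $x$ and $y$ is a unimodular equivalence sending $\Delta$ to the exceptional triangle from Theorem~\ref{conjexceptional}. The remaining task --- and the main technical point --- is to verify that $f$ is $\Delta$-non-degenerate. Non-degeneracy on the open two-dimensional face follows directly from smoothness of $C \cap \TT_k^2$. Each of the three edges has primitive direction vector and hence lattice length $1$, so its edge polynomial is a binomial with both coefficients non-zero; such a binomial is automatically non-degenerate (on a suitable one-parameter subgroup it becomes a linear polynomial with a unique simple root). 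With $\Delta$-non-degeneracy established, Theorem~\ref{conjexceptional} applies and yields that the Hasse-Witt matrix of $C$ has full rank $g = 3$, i.e.\ $C$ is ordinary.
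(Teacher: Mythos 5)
Your proof is correct and follows essentially the same route as the paper's: position the three flexes at the coordinate points, observe that the flex-tangency conditions force the quartic to be supported on (a unimodular copy of) the exceptional triangle with non-vanishing vertex coefficients, check $\Delta$-non-degeneracy, and invoke Theorem~\ref{conjexceptional}. The paper compresses all of this into ``one verifies that the dehomogenization is $\Delta$-non-degenerate''; you simply supply those details, including the correct observations that smoothness of $C$ handles the two-dimensional face and that edges of lattice length one carry binomial edge polynomials which are automatically non-degenerate.
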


\begin{proof}
A projective transformation positions the three inflection points at $(0:0:1)$, $(0:1:0)$ and $(1:0:0)$. One verifies
that the dehomogenization of the corresponding defining polynomial is $\Delta$-non-degenerate, where $\Delta$ is the left-most
polygon in the statement of the previous corollary.
\end{proof}

\begin{remark}
  Theorems~\ref{conjsmoothplane},~\ref{conjhyp} and~\ref{conjexceptional} provide several characteristic $2$ examples
  of families of curves whose Hasse-Witt matrices have constant rank. This (partly) addresses Question~2 of \cite[\S3.7]{farnellpries}.
\end{remark}

\section{Hyperelliptic curves} \label{hyperelliptic}

Let $C$ be a hyperelliptic curve
of genus $g \geq 2$
over a perfect field~$k$.
Then $C$ has a smooth weighted projective plane model of the form (\ref{eq:hypmodel}).
The Newton polygon of (the defining polynomial of) the corresponding
affine model $y^2 + H_1(x,1)y - H_0(x,1)=0$
is contained in
a triangle
with vertices $(0,0)$, $(2g+2,0)$ and $(0,2)$,
and is generically equal to this triangle.
In particular, Theorem~\ref{maintheorem} implies
that if the characteristic of $k$ is $2$
and $C$ is sufficiently general of odd genus, then its Jacobian
has a non-trivial $k$-rational $2$-torsion point.
By Corollary~\ref{ordinaryimplies2torsion}
we can replace `sufficiently general' by `ordinary'.

The purpose of this stand-alone section is to give alternative proofs of these facts
(Corollaries~\ref{cor:hyp2} and~\ref{cor:hyp5}),
using an explicit description of the $2$-torsion subgroup of $\text{Jac}(C)$.

\begin{theorem}\label{thm:hyp}
Let $C/k$ be a hyperelliptic curve over a perfect
field $k$ of characteristic~$2$
given by a smooth model~\eqref{eq:hypmodel}.
%
%
The Jacobian of $C$ has \emph{no}
rational point of order $2$
if and only if $H_1(X,Z)$
is a power of an irreducible odd-degree
polynomial in $k[X,Z]$.
\end{theorem}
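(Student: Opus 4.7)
My plan is to compute $\text{Jac}(C)(k)[2]$ by first realizing $\text{Jac}(C)(\bar k)[2]$ as a concrete Galois module attached to the Weierstrass points, and then taking Galois invariants. Let $W \subset C(\bar k)$ be the set of Weierstrass points: in characteristic $2$ these are exactly the fixed points of the hyperelliptic involution, and they project bijectively onto the distinct zeros of $H_1(X,Z)$ in $\PP^1(\bar k)$. Since $2P \sim 2P'$ for any two $P,P' \in W$ (both sides are pull-backs of points of $\PP^1$ under the hyperelliptic map), fixing any base point $P_0 \in W$ produces a well-defined, Galois-equivariant, $\FF_2$-linear map
\[
\phi \colon V \;\longrightarrow\; \text{Jac}(C)(\bar k)[2], \qquad e \;\longmapsto\; \Bigl[\,\sum_{P} e_P\, P \;-\; \bigl(\textstyle\sum_{P} e_P\bigr) P_0\,\Bigr],
\]
where $V := \ker\bigl(\bigoplus_{P\in W}\FF_2 \xrightarrow{\Sigma} \FF_2\bigr)$; independence of $P_0$ follows from $2P \sim 2P'$ combined with the evenness of $\sum_P e_P$. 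I expect to prove that $\phi$ is an isomorphism by combining the identity $\dim_{\FF_2} \text{Jac}(C)(\bar k)[2] = |W| - 1 = \dim_{\FF_2} V$ (a hyperelliptic specialisation of Subrao's classical $p$-rank formula for Artin--Schreier covers) with an injectivity check via the Mumford representation of $2$-torsion on the affine model $y^2 + H_1(x,1)\,y = H_0(x,1)$.

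Once the isomorphism is in place, I would unwind the $G_k$-action on $W$. Writing $H_1 = c\prod_{i=1}^{n} Q_i^{e_i}$ with $c \in k^\times$ and $Q_i \in k[X,Z]$ pairwise coprime irreducibles of degrees $d_i$, the $G_k$-orbits on $W$ are exactly the zero sets $W_i$ of the $Q_i$, of sizes $d_i$. Hence $(\FF_2^W)^{G_k}$ has basis $\{\mathbf 1_{W_i}\}_{i=1}^n$, and an invariant element $\sum_i c_i \mathbf 1_{W_i}$ lies in $V^{G_k}$ exactly when $\sum_i c_i d_i \equiv 0 \pmod 2$. Consequently $\dim_{\FF_2} V^{G_k}$ equals $n$ if every $d_i$ is even, and $n-1$ otherwise.

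Since $H_1 \neq 0$ forces $n \geq 1$, the isomorphism $V^{G_k} \cong \text{Jac}(C)(k)[2]$ then shows that $\text{Jac}(C)(k)[2] = 0$ holds precisely when $n = 1$ and $d_1$ is odd, which is exactly the statement that $H_1$ is a power of an irreducible polynomial of odd degree. The main obstacle I anticipate is carefully establishing the isomorphism $\phi$, in particular its injectivity, and handling uniformly the case where $Z$ divides $H_1$, i.e.\ where one of the Weierstrass points lies at the point at infinity of the weighted projective model.
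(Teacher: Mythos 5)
Your proposal follows essentially the same route as the paper's proof: realize $\text{Jac}(C)(\overline{k})[2]$ as the kernel of the sum map on $\FF_2^W$ for the Weierstrass locus $W$ lying over the distinct roots of $H_1$ (injectivity plus the count $2^{n-1}$, for which the paper cites Elkin--Pries where you cite Subrao/Deuring--Shafarevich), then take Galois invariants and observe that the invariant subgroup vanishes exactly when $H_1$ is a power of a single irreducible factor of odd degree. The only cosmetic differences are your choice of a Weierstrass base point $P_0$ in place of the paper's $k$-rational degree-$2$ divisor $D$, and your (unexecuted) Mumford-representation injectivity check in place of the paper's argument via functions invariant under the hyperelliptic involution.
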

\begin{corollary}\label{cor:hyp1}
Let $C/k$ be a hyperelliptic curve of odd $2$-rank
over a perfect field $k$ of characteristic~$2$.
Then the Jacobian of $C$ has a $k$-rational point
of order $2$.
\end{corollary}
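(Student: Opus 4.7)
I would argue by contrapositive: assuming that $\text{Jac}(C)$ has \emph{no} $k$-rational point of order $2$, I will show that the $2$-rank of $\text{Jac}(C)$ is even. By Theorem~\ref{thm:hyp}, the assumption gives $H_1 = P^n$ for some integer $n \geq 1$ and some irreducible polynomial $P \in k[X,Z]$ of odd degree $d$.

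The key input is the classical formula that, for a hyperelliptic curve $C$ in characteristic~$2$ given by a smooth model of the form~\eqref{eq:hypmodel}, the $2$-rank of $\text{Jac}(C)$ equals $s - 1$, where $s$ is the number of distinct zeros of $H_1$ in $\PP^1(\bar k)$ (equivalently, the number of ramification points of the hyperelliptic double cover $C \to \PP^1$). This follows from the Deuring--Shafarevich formula applied to the $\mathbf{Z}/2\mathbf{Z}$-cover $C \to \PP^1$, and it also drops out directly from the explicit description of $\text{Jac}(C)[2](\bar k)$ in terms of differences of Weierstrass points that the present section proposes to develop; in that language, $\text{Jac}(C)[2](\bar k) \cong \{(a_1, \dots, a_s) \in \FF_2^s : \sum a_i = 0\}$, with $\text{Gal}(\bar k/k)$ acting by permutation of the branch points.

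Because $k$ is perfect, $P$ is separable over $k$, so it has exactly $d$ distinct zeros in $\PP^1(\bar k)$. Since $H_1 = P^n$ has the same distinct zero locus as $P$, we get $s = d$. The $2$-rank of $\text{Jac}(C)$ thus equals $d - 1$, which is even because $d$ is odd, contradicting the hypothesis of odd $2$-rank.

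The only nontrivial step is the $2$-rank formula $2\text{-rank} = s-1$; everything else is a one-line combination of Theorem~\ref{thm:hyp} with the separability of irreducible polynomials over a perfect field. I therefore expect the main (small) obstacle to be the careful citation or re-derivation of that formula in the setting of the weighted projective model~\eqref{eq:hypmodel}, including the correct bookkeeping at the point $[1:0]$ when $\deg_X H_1 < g+1$.
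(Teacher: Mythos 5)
Your proposal is correct and is essentially the paper's own argument, just phrased contrapositively: both rest on the two-rank formula (two-rank equals the number of distinct roots of $H_1$, i.e.\ the degree of its radical, minus one — the paper cites \cite[Thm.~1.3]{EP} and the proof of Theorem~\ref{thm:hyp} for this) combined with Theorem~\ref{thm:hyp} and the observation that an irreducible polynomial over a perfect field is separable. The bookkeeping at $(1:0)$ that you worry about is automatic here since $H_1$ is homogeneous in $X,Z$ of degree $g+1$.
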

\begin{corollary}\label{cor:hyp2}
Let $C/k$ be an ordinary hyperelliptic curve of odd genus
over a perfect field $k$ of characteristic~$2$.
Then the Jacobian of $C$ has a $k$-rational point
of order $2$.
\end{corollary}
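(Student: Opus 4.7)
The plan is to deduce Corollary~\ref{cor:hyp2} as an essentially immediate consequence of Corollary~\ref{cor:hyp1}. Recall that a curve $C$ of genus $g$ over a perfect field $k$ of characteristic~$2$ is called \emph{ordinary} precisely when the $2$-rank of $\mathrm{Jac}(C)$ attains its maximal value $g$. Under the hypotheses of the corollary, $g$ is odd, so if $C$ is ordinary then the $2$-rank of $\mathrm{Jac}(C)$ is also odd. Corollary~\ref{cor:hyp1} then directly furnishes the desired non-trivial $k$-rational point of order $2$ on $\mathrm{Jac}(C)$.

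There is genuinely no substantial step to carry out beyond this one-line reduction; the real content has been packaged into the preceding Theorem~\ref{thm:hyp} and Corollary~\ref{cor:hyp1}, both of which rely on the explicit description of the $2$-torsion subgroup of $\mathrm{Jac}(C)$ that is the subject of Section~\ref{hyperelliptic}. The only background fact one implicitly needs is the standard chain of equivalences characterizing ordinarity (2-rank equals $g$, the Hasse-Witt matrix has full rank $g$, the Frobenius acts bijectively on $H^1(C,\mathcal{O}_C)$), which is classical; see for instance \cite[Prop.~10]{serre}.

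If there is any potential obstacle, it is only conceptual rather than technical: one should note that the argument here is logically independent of the toric-geometric proof of Corollary~\ref{ordinaryimplies2torsion} given in Section~\ref{hassewitt}. The present proof routes through the explicit hyperelliptic $2$-torsion description underlying Theorem~\ref{thm:hyp}, whereas the earlier one compared $\Theta_{\text{geom}}$ with $\Theta_{\text{arith}}$ via the Hasse-Witt matrix and the Stöhr-Voloch identity. Both approaches yield the same statement in the hyperelliptic case, which is the point of presenting this section as a stand-alone verification.
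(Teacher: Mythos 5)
Your reduction is exactly the paper's proof: ordinarity means the $2$-rank of $\mathrm{Jac}(C)$ equals its dimension $g$, which is odd by hypothesis, so Corollary~\ref{cor:hyp1} applies immediately. This matches the paper's one-line argument verbatim in substance, so there is nothing to add.
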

\begin{corollary}\label{cor:hyp4}
Let $C/k$ be a hyperelliptic curve of genus $2^m-1$ over a perfect field
$k$ of characteristic~$2$, for some integer~$m \geq 2$.
If the Jacobian of $C$ has no $k$-rational point of order~$2$,
then it has $2$-rank zero, but it is not supersingular.
\end{corollary}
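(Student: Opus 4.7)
My plan is to combine Theorem~\ref{thm:hyp} with an explicit computation of the Cartier operator on $H^0(\Omega^1_C)$, followed by an Artin-Schreier analysis that rules out supersingularity.

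First, Theorem~\ref{thm:hyp} forces $H_1 \in k[X,Z]$ to be a power of an irreducible polynomial of odd degree. Since $\deg H_1 = g+1 = 2^m$ and the only odd divisor of $2^m$ is $1$, the irreducible factor must be linear, so $H_1 = cL^{2^m}$ for some $c \in k^\times$ and linear $L \in k[X,Z]$. Rescaling $Y$ and performing a linear change in $(X,Z)$, we may assume $H_1 = X^{2^m}$.

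To establish that the $2$-rank is zero, I compute the Cartier operator on the toric basis of $H^0(\Omega^1_C)$. With $\Delta = \conv\{(0,0),(2g+2,0),(0,2)\}$ and $\partial f/\partial y = x^{2^m}$, the basis provided by the proof of Lemma~\ref{combinatoriclemma} is $\omega_i = x^{i-2^m-1}\,dx$ for $i=1,\dots,2^m-1$. The standard identity $\mathcal{C}(x^n\,dx) = x^{(n-1)/2}\,dx$ for $n$ odd (and $0$ for $n$ even) then gives $\mathcal{C}(\omega_i) = \omega_{i/2+2^{m-1}}$ when $i$ is even, and $\mathcal{C}(\omega_i) = 0$ when $i$ is odd. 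Since $2^{m-1}$ has $2$-adic valuation $m-1$ for $m \ge 2$, the map $i \mapsto i/2 + 2^{m-1}$ strictly decreases the $2$-adic valuation of its (even) input, so iteration reaches an odd index within at most $m$ steps. Hence $\mathcal{C}$ is nilpotent and the $2$-rank (the stable rank of Frobenius on $H^1(\mathcal{O}_C)$) vanishes.

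For non-supersingularity, the substitution $y = z x^{2^m}$ exhibits $C$ birationally as the Artin-Schreier cover $z^2 + z = H_0(x)/x^{2^{m+1}}$ of $\PP^1_x$, wildly ramified only at $x=0$. Smoothness of $C$ forces $H_0'(0) \neq 0$, i.e., $a_1 \neq 0$ in $H_0 = \sum a_i x^i$; hence after the cascade of Artin-Schreier reductions $x^{-2^{m+1}} \leadsto x^{-2^m} \leadsto \cdots \leadsto x^{-1}$ the reduced form retains a pole of odd conductor exactly $d = 2^{m+1}-1$, with non-zero coefficient $a_1$. I then invoke the fact that the Newton polygon of a $(\ZZ/2\ZZ)$-cover of $\PP^1$ with a single wild branch point of conductor $d$ depends only on $(2,d)$ and agrees with that of $y^2 - y = x^d$. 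Supersingularity of the latter Jacobian is equivalent to $d \mid 2^k+1$ for some $k \ge 1$; but the subgroup of $(\ZZ/d\ZZ)^\times$ generated by $2$ is $\{1, 2, 4, \ldots, 2^m\}$, which contains $-1 \equiv 2^{m+1}-2 \pmod d$ only when $2^{m+1}-2$ is itself a power of $2$, i.e., only for $m \le 1$. Hence for $m \ge 2$, $\text{Jac}(C)$ is not supersingular.

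The principal obstacle is the rigidity input in the final step: that the Newton polygon of a one-wild-branch-point $(\ZZ/2\ZZ)$-cover of $\PP^1$ depends only on the local conductor. This is the core arithmetic content, drawing on results about Newton polygons of Artin-Schreier covers; by contrast, the $2$-rank computation is an essentially combinatorial consequence of the toric basis of Lemma~\ref{combinatoriclemma} together with the identity for $\mathcal{C}$ on monomial differentials.
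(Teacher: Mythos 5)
Your reduction to $H_1=cL^{2^m}$ with $L$ linear is exactly the paper's first step, and your $2$-rank argument, while correct, is more work than necessary: the paper simply reuses the fact (already invoked in the proofs of Theorem~\ref{thm:hyp} and Corollary~\ref{cor:hyp1}, via \cite[Thm.~1.3]{EP}, and equivalent to Deuring--Shafarevich for this $\ZZ/2\ZZ$-cover) that the $2$-rank equals $n-1$ with $n$ the number of distinct roots of $H_1$, so ``$H_1$ is a power of a linear form'' gives $2$-rank zero in one line. Your explicit Cartier-operator computation on the basis $x^{i-2^m-1}\,dx$ is a valid alternative; the valuation argument for $i\mapsto i/2+2^{m-1}$ does work since $v_2(i)\le m-1$ for every index in range, so the valuation drops by exactly one at each step and nilpotency follows.

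The non-supersingularity step, however, contains a genuine gap. The paper disposes of it by citing Scholten--Zhu \cite[Thm.~1.2]{SZ} (there are no supersingular hyperelliptic curves of genus $2^m-1$ in characteristic $2$), whereas you replace that citation by the assertion that the Newton polygon of a $(\ZZ/2\ZZ)$-cover of $\PP^1_k$ with a single wild branch point depends only on the conductor $d$. That assertion is not a known fact and is false in general: for fixed ramification data only the $p$-rank is rigid; the Newton polygon genuinely stratifies Artin--Schreier families (this is the subject of \cite{prieszhu} and of the literature on Newton polygons of Artin--Schreier curves). For instance, for conductor $d=9$ the curve $y^2+y=x^9$ is supersingular (as are all curves $y^2+y=ax^9+bx^5+cx^3+ex$, by van der Geer--van der Vlugt, since the exponents are of the form $2^i+1$), while the general member of the conductor-$9$ family (e.g.\ with an $x^7$ term present) is not; so the Newton polygon cannot be a function of $(2,d)$ alone. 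Your reduction lands on $d=2^{m+1}-1$ with nontrivial lower-order terms, and proving that \emph{every} cover with this ramification datum is non-supersingular is precisely the content of \cite[Thm.~1.2]{SZ}; your computation with the subgroup generated by $2$ in $(\ZZ/d\ZZ)^\times$ only addresses the single curve $y^2+y=x^d$ (and even there the ``only if'' direction of the criterion $d\mid 2^k+1$ would need justification). As written, the argument assumes the theorem it is meant to prove. Either cite \cite[Thm.~1.2]{SZ} directly, as the paper does, or supply a genuine proof of the rigidity statement for $d=2^{m+1}-1$.
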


\noindent Finally, for integers $g, r \geq 1$, let $c_{g,r}$ be the proportion of
equations (\ref{eq:hypmodel}) over $\FF_{2^r}$
that define a curve of genus $g$ whose
Jacobian has at least one rational point of order~$2$.

\begin{corollary}\label{cor:hyp5}
The limit $\lim_{r\rightarrow\infty} c_{g,r}$ exists and
we have
\[ \lim_{r\rightarrow\infty} c_{g,r} = \begin{cases}
   1 & \text{if $g$ is odd,} \\
   g / (g + 1) & \text{if $g$ is even}.
\end{cases} \]
\end{corollary}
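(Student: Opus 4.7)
The plan is to combine Theorem~\ref{thm:hyp} with an elementary count. Write $q = 2^r$, $n = g+1$, and let $B_n \subset \FF_q[X,Z]_n$ denote the set of homogeneous polynomials of the form $\lambda P^m$ with $\lambda \in \FF_q^\times$ and $P \in \FF_q[X,Z]$ irreducible homogeneous of odd degree $d$ satisfying $dm = n$. By Theorem~\ref{thm:hyp}, an equation \eqref{eq:hypmodel} that defines a smooth curve of genus $g$ contributes to $c_{g,r}$ precisely when $H_1 \notin B_n$. The goal is therefore to prove
\[
c_{g,r} \;=\; 1 \;-\; \frac{|B_n|}{q^{n+1}} \;+\; o(1) \qquad (r \to \infty),
\]
and then to compute $\lim_{r \to \infty} |B_n|/q^{n+1}$.

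For the count, I would decompose $B_n$ over pairs $(d,m)$ with $d$ odd and $dm = n$. The irreducible homogeneous $P \in \FF_q[X,Z]$ of degree $d$, taken modulo $\FF_q^\times$, are in bijection with the closed points of degree $d$ on $\PP^1_{\FF_q}$, whose number $N_d$ satisfies $N_d = q^d/d + O(q^{d/2})$ (and $N_1 = q+1$). By unique factorization each such $P$ accounts for exactly $q-1$ distinct elements $\lambda P^m$, so $|B_n| = (q-1) \sum_{(d,m)} N_d$. If $g$ is even then $n$ is odd and the term $(d,m)=(n,1)$ dominates, giving $|B_n|/q^{n+1} \to 1/n = 1/(g+1)$. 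If $g$ is odd then $n$ is even, every odd divisor of $n$ is at most $n/2$, so $|B_n| = O(q^{n/2+1})$ and $|B_n|/q^{n+1} \to 0$.

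To justify the displayed approximation, I would observe that the condition on $(H_0, H_1)$ of defining a smooth curve of genus $g$ is Zariski-open in $\mathbf{A}^{3g+5}$, cut out by the non-vanishing of a polynomial of the type $\rho_\Delta$ from Section~\ref{section_toric} (together with a factor forcing the Newton polygon to be the full triangle), and this polynomial is not identically zero since smooth hyperelliptic curves of every genus exist in characteristic $2$. A Lang--Weil/Schwartz--Zippel estimate then bounds its zero locus by $O(q^{3g+4})$, so the proportion of smooth equations tends to $1$. Applying the same estimate fibre-wise over the choice of $H_1 \in B_n$ shows that the pairs with $H_1 \in B_n$ defining a non-smooth curve are negligible compared with $|B_n| \cdot q^{2g+3}$, provided one verifies that for each admissible type $(d,m)$ at least one smooth equation of this shape exists. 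For $m=1$ this is immediate; for $m \geq 2$ one inspects the singularity equations $h_1(x_0) = h_0'(x_0) = 0$, $y_0^2 = h_0(x_0)$ arising in the proof of Theorem~\ref{thm:hyp} and chooses $h_0$ with $\gcd(P, h_0') = 1$. Substituting the two asymptotics then yields $c_{g,r} \to g/(g+1)$ when $g$ is even and $c_{g,r} \to 1$ when $g$ is odd.

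The main technical obstacle is this fibre-wise non-emptiness of the smooth locus over each stratum with $m \geq 2$: in those strata $H_1$ is inseparable, so the smoothness criterion in characteristic $2$ has to be checked directly from the defining equation rather than via the usual discriminant. Once that is in hand, the remainder of the argument is a standard count of closed points on $\PP^1_{\FF_q}$ together with routine density estimates.
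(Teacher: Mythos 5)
Your argument is correct and follows essentially the same route as the paper's: reduce via Theorem~\ref{thm:hyp} to counting the $H_1$ that are powers of odd-degree irreducibles, observe that the inseparable ones (your strata with $m \geq 2$) are negligible, and compute the limiting proportion of irreducible polynomials of degree $g+1$. You are more explicit than the paper about why conditioning on smoothness does not disturb the count, and the ``main technical obstacle'' you flag is in fact not needed: the strata with $m \geq 2$ contribute only $O(q^{n/2+1})$ of the $q^{n+1}$ choices of $H_1$ no matter how many smooth equations lie above them, so the fibre-wise density estimate is only required over the separable ($m=1$) stratum, where it is immediate.
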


\begin{proof}[Proof of Theorem~\ref{thm:hyp}]
All we need to do is describe the two-torsion of the Jacobian
$\text{Jac}(C)$ of~$C$. Since we were not able to find a ready-to-use statement in the literature,
we give a stand-alone treatment, even though what follows
is undoubtedly known to several experts in the field; for instance, it
is implicitly contained in~\cite{EP,prieszhu}. Let $\overline{k}$ be an algebraic closure
of $k$. Note that $C$ has a unique point $Q_{(a:b)} = (a : \sqrt{F(a,b)} : b)\in C(\overline{k})$
for every root $(a:b)\in\mathbf{P}^1_{\overline{k}}$ of~$H_1=H_1(X,Z)$.
This gives $n$ points, where $n  \in \{1, \dots, g+1 \}$ is the number
of distinct roots of~$H_1$.
Let $D$ be the divisor of zeroes of a vertical line,
so $D$ is effective of degree~$2$.
All such divisors $D$ are linearly equivalent,
and are linearly equivalent to $2Q_{(a:b)}$ for each~$(a:b)$.
In particular, if we let
$$A = \ker \left( \xymatrix{\displaystyle \bigoplus_{(a:b)} (\ZZ/2\ZZ) \ar^{\sum}[r] & (\ZZ/2\ZZ)} \right),$$
then we have a homomorphism
\begin{align*}
A & \quad\longrightarrow \quad \text{Jac}(C)(\overline{k})[2]\\
(c_{(a:b)}\ \mbox{mod}\ 2)_{(a:b)} & \quad\longmapsto \quad(\sum_{(a:b)} c_{(a:b)} Q_{(a:b)} ) - (\frac{1}{2}\sum_{(a:b)} c_{(a:b)})D.
\end{align*}
In fact, this map is an isomorphism. Indeed, it is injective
because if the divisor of a function is invariant under the hyperelliptic involution, then so
is the function itself, i.e.\ it is contained in $\overline{k}(x)$. But at the points $Q_{(a:b)}$
such functions can only admit poles or zeroes having an even order.
Surjectivity follows from the fact that $\text{Jac}(C)(\overline{k})[2]$ is generated by divisors that are supported on
the Weierstrass locus of $C$. This can be seen using Cantor's algorithm
\cite[Appendix.\S6-7]{Koblitz}, for the application of which one needs to
transform the curve to a so-called imaginary model; this is always possible over $\overline{k}$.
Alternatively,
surjectivity follows from the injectivity and the fact that $\# \text{Jac}(C)(\overline{k})[2] = 2^{n-1}$ by
\cite[Thm.~1.3]{EP}.

Then in particular, the rational $2$-torsion subgroup
$\text{Jac}(C)(k)[2]$
is isomorphic to the subgroup of elements of~$A$
that are invariant under $\mathrm{Gal}(\overline{k}/k)$,
that is, to
\begin{equation*}
A_k = \ker
\left(
\bigoplus_{P\mid H_1} (\ZZ/2\ZZ) \rightarrow (\ZZ/2\ZZ) : (c_P)_P  \mapsto \sum_{P} c_P\deg(P)
\right)
\end{equation*}
where the sum is taken over the irreducible factors $P$ of~$H_1$.

The only way for~$A_k$ to be trivial is for~$H_1$
to be the power of an irreducible factor~$P$
of odd degree.
\end{proof}

\begin{proof}[Proof of Corollary~\ref{cor:hyp1}]
Let $n$ be the degree of the radical $R$ of~$H_1$.
The $2$-rank of $C$ equals $n-1$
(as in the proof of Theorem~\ref{thm:hyp}; see e.g.~\cite[Thm.~1.3]{EP}).
So if the $2$-rank is odd, then~$R$ has even degree, which
implies that $H_1$ is not a power of an irreducible odd-degree polynomial.
In particular, Theorem~\ref{thm:hyp}
implies that $C$ has a non-trivial $k$-rational $2$-torsion point.
\end{proof}
\begin{proof}[Proof of Corollary~\ref{cor:hyp2}]
This is a special case of Corollary~\ref{cor:hyp1}
since in characteristic~$2$, the $2$-rank of an ordinary abelian variety equals
its dimension.
\end{proof}
\begin{proof}[Proof of Corollary~\ref{cor:hyp4}]
If there is no rational point of order~$2$, then $H_1$ is a power of
a polynomial of odd degree dividing $\deg\,H_1=g+1=2^m$.
In other words, it is a power
of a linear polynomial and hence the $2$ rank of $C$ is zero.
There are no supersingular hyperelliptic
curves of genus $2^m-1$ in characteristic~$2$ by~\cite[Thm.~1.2]{SZ}.
\end{proof}
\begin{proof}[Proof of Corollary~\ref{cor:hyp5}]
As $r \rightarrow \infty$, the proportion of equations (\ref{eq:hypmodel})
for which $H_1$ is not separable becomes negligible.
By Theorem~\ref{thm:hyp} it therefore
suffices to prove the corresponding limit for the proportion
of degree $g + 1$ polynomials that are \emph{not} irreducible of odd degree.
If $g$ is odd then this proportion is clearly $1$. If $g$ is even
then this
is the same as the proportion of reducible polynomials of degree $g+1$,
which converges to $1 - (g+1)^{-1}$.
\end{proof}

\begin{remark} \label{isomorphism}
  In Corollary~\ref{cor:hyp5}, instead of working with the proportion of equations (\ref{eq:hypmodel}),
  we can work with the corresponding proportion of $\FF_{2^r}$-isomorphism classes of
  hyperelliptic curves of genus $g$.  This is because
  the subset of equations~(\ref{eq:hypmodel}) that define a hyperelliptic curve of genus $g$ whose
  only non-trivial geometric automorphism
  is the hyperelliptic involution (inside the affine space of all
  equations of this form) is non-empty~\cite{bjornaut}, open, and defined over $\FF_2$ (being invariant under
  the $\text{Gal}(\overline{\FF}_2, \FF_2)$-action). See also~\cite{zhu}.
\end{remark}

We finish by identifying the $2$-torsion point
from the proof of Theorem~\ref{maintheorem} in the hyperelliptic case
with one of the $2$-torsion points from the proof of Theorem~\ref{thm:hyp}.
The former proof
provides
$\Theta_{\mathrm{arith}}$ and $\Theta_{\mathrm{geom}}$
with $2\Theta_{\mathrm{arith}} \sim 2\Theta_{\mathrm{geom}}$,
hence the class of
$T = \Theta_{\mathrm{arith}} - \Theta_{\mathrm{geom}}$
is two-torsion.
We have $2\Theta_{\mathrm{arith}}=\text{div} \, dx$.
To compute $2\Theta_{\mathrm{geom}}$, we need to take
an appropriate model as in the proof of
Lemma~\ref{combinatoriclemma}.
The bivariate polynomial $y^2 + H_1(x,1)y + H_0(x,1)$
gives an affine model of our hyperelliptic curve~$C$,
and if $g$ is odd, then
the system from Lemma~\ref{combinatoriclemma} admits the solution $(1,1)$.
By the proof of that lemma, we should then look at the
toric model $C'_f$ where $$f=x^{-1}(y + H_1(x,1) + y^{-1}H_0(x,1)).$$
Then $\Theta_{\mathrm{geom}}$
is given by $2\Theta_{\mathrm{geom}} = \text{div} \frac{1}{xy \frac{\partial f}{\partial y}}dx$,
so
we compute
\[
\frac{\partial f}{\partial y} = x^{-1} (1+y^{-2}H_0(x,1))
=x^{-1}y^{-1}\ H_1(x,1).
\]
We find
\[
T = \Theta_{\mathrm{arith}} - \Theta_{\mathrm{geom}}
 =\frac{1}{2}\ \text{div}\, xy \frac{\partial f}{\partial y}
 =\frac{1}{2} \text{div}\, H_1(x,1),
\]
where $ \text{div} \, H_1(x,1)$ is twice the sum of all points
$P_{(a:b)}$ as $(a:b)$ ranges over the roots of~$H_1(X,Z)$ in $\mathbf{P}^1_{\overline{k}}$ (with multiplicity),
minus $(g+1)$ times the divisor~$D$ of degree~$2$ at infinity.
This is the $2$-torsion point from the proof of Theorem~\ref{thm:hyp}
corresponding to the element $(1,1,\ldots,1)\in A_k$.

\section*{Acknowledgements}
We sincerely thank Christophe Ritzenthaler, Arne Smeets and the anonymous referees for several helpful comments. The
first author was supported financially by FWO-Vlaanderen.

\vspace{0.5cm}

\noindent \verb"wouter.castryck@gmail.com".\\
\noindent Departement Wiskunde, KU Leuven, Celestijnenlaan 200B, 3001 Leuven, Belgium.\\

\noindent \verb"marco.streng@gmail.com".\\
\noindent Mathematisch Instituut, Universiteit Leiden, Postbus 9512, 2300 RA Leiden, The Netherlands.\\

\noindent \verb"d.testa@warwick.ac.uk".\\
\noindent Mathematics Institute, University of Warwick, Coventry CV4 7AL, United Kingdom.\\

\end{document}